\newcommand\numberthis{\addtocounter{equation}{1}\tag{\theequation}}
\newtheorem{theorem}{Theorem}[section]
\newtheorem{corollary}{Corollary}[section]
\newtheorem{lemma}{Lemma}[section]
\newtheorem{defn}{Definition}[section]
\newcommand{\T}{{\mathcal T}}
\renewcommand{\S}{{\mathbb S}}
\newcommand{\U}{\bm{U}}
\newcommand{\V}{\bm{V}}
\renewcommand{\P}{\mathscr{P}}
\newcommand{\Pui}{\mathscr{P}_{\U^{(i)}}^{(i)}}
\newcommand{\Puil}{\mathscr{P}_{\U^{(i)}_l}^{(i)}}
\newcommand{\Puilp}{\mathscr{P}_{\U^{{(i)}\perp}_l}^{(i)}}
\newcommand{\Pujnil}{\mathscr{P}^{(j\neq i)}_{\C_l,\{\U_l^{(j)}\}_{j\neq i}}}
\newcommand{\R}{{\mathbb R}}
\newcommand{\Z}{{\mathcal Z}}
\newcommand{\C}{\mathcal{C}}
\newcommand{\I}{{\mathscr I}}
\newcommand{\inner}[1]{\langle #1 \rangle}
\newcommand{\A}{{\mathscr A}}
\newcommand{\rr}{{\bm r}}
\newcommand{\norm}[1]{\left\lVert#1\right\rVert}
\newcommand{\abs}[1]{\left\lvert#1\right\rvert}
\title{Provable Near-Optimal Low-Multilinear-Rank Tensor Recovery}
\author{Jian-Feng Cai\thanks{Department of Mathematics, the Hong Kong University of Science and Technology, Clear Water Bay, Kowloon, Hong Kong SAR, China. Emails: \texttt{jfcai, lmiao, yangwang, mayxian@ust.hk}}
\and
Lizhang Miao$^*$\thanks{Corresponding author}
\and
Yang Wang$^*$
\and
Yin Xian$^*$
}
\begin{document}

\maketitle

\begin{abstract}
We consider the problem of recovering a low-multilinear-rank tensor from a small amount of linear measurements. We show that the Riemannian gradient algorithm initialized by one step of iterative hard thresholding can reconstruct an order-$d$ tensor of size $n\times\ldots\times n$ and multilinear rank $(r,\ldots,r)$ with high probability from only $O(nr^2 + r^{d+1})$ measurements, assuming $d$ is a constant. This sampling complexity is optimal in $n$, compared to existing results whose sampling complexities are all unnecessarily large in $n$. The analysis relies on the tensor restricted isometry property (TRIP) and the geometry of the manifold of all tensors with a fixed multilinear rank. High computational efficiency of our algorithm is also achieved by doing higher order singular value decomposition on intermediate small tensors of size only $2r\times \ldots\times 2r$ rather than on tensors of size $n\times \ldots\times n$ as usual. 
\end{abstract}

\section{Introduction}
The tensor recovery problem arises in a variety of applications, such as machine learning~\cite{argyriou2007multi,collins2012tensor, romera2013multilinear,anandkumar2014tensor}, signal processing~\cite{mesgarani2006discrimination,li2010tensor,nion2010tensor}, bioinformatics~\cite{troyanskaya2001missing}, and quantum state tomography~\cite{gross2010quantum,gross2011recovering}. Let $\T \in \R^{n_1\times n_2\times\cdots\times n_d}$ be an unknown tensor. The goal of tensor recovery problem is to reconstruct $\T$ from its linear measurements 
\begin{equation}\label{eq:mea}
\bm{y}=\A\T,
\end{equation}
where $\A~:~\mathbb{R}^{n_1\times n_2\times\cdots\times n_d}\to \R^m$ with $m\ll \prod_{i=1}^d n_i$ is a linear operator defined by
\begin{equation}\label{eq:meaA}
[\A\T]_i=\langle \mathcal{A}_i,\T \rangle,\quad i=1,\ldots,m
\end{equation}
with $\mathcal{A}_i\in\R^{n_1\times n_2\times\cdots\times n_d}$ for $i=1,\ldots,m$ measurement tensors. Since $m\ll \prod_{i=1}^d n_i$, it is impossible to have a unique tensor recovery if no additional structure on $\T$ is assumed. The additional structure is to make the tensor compressible, meaning only very few parameters are able to determine the tensor completely. Similar to a matrix, a popular way to describe the compressibility of a tensor is its rank. As long as the rank of $\mathcal{T}$ is low enough such that the degree of freedom in $\T$ is small enough compared to $m$, it is possible to recover $\T$ from its linear measurements $\bm{y}$ defined in \eqref{eq:mea}.

As a special case of low-rank tensor recovery where $d=2$, low-rank matrix recovery have been investigated extensively since the pioneering works \cite{fazel2002matrix,candes2009exact,candes2009power}. The low-rank matrix recovery problem is NP-hard \cite{harvey2006complexity} in the worst case. Nevertheless, there are many algorithms available for successful low-rank matrix recovery in most of the cases. They can be categorized into convex and non-convex optimization based approaches. In convex optimization based approaches \cite{fazel2002matrix,candes2009exact,candes2009power,cai2010singular}, nuclear norm minimizations are usually applied to recover the low-rank matrix from its linear measurements. Under suitable assumptions and different settings, it is shown that $m\sim O(nr\log^{\alpha}n)$ for some $\alpha\geq0$ is sufficient for an exact recovery of an $n\times n$ matrix of rank $r$ via a nuclear norm minimization. However, the computation of nuclear norm minimization can be expensive and consumes large memory, though the low-rank structure can be exploited \cite{cai2010singular}. In non-convex methods, the unknown low-rank matrix is either parameterized in a factorization form or represented as an element in the set of all low-rank matrices. Therefore, they are generally faster than their convex counterparts and use much less memory. A major difficulty of non-convex approaches is how to avoid possible local minima. Surprisingly, it has been shown that these non-convex low-rank matrix recovery approaches are guaranteed to converge to the global minimum \cite{jain2010guaranteed,tanner2013normalized,blanchard2015cgiht,wei2016guarantees,wei2020guarantees,cai2018solving,chi2019nonconvex}, and there is no spurious local minima of many non-convex functions for low-rank matrix recovery \cite{li2019toward,ge2017no,sun2018geometric,zhu2018global}. The sampling complexity is typically $O(nr^{\beta_1}\log^{\beta_2}n)$ with some $\beta_1\geq 2$ and $\beta_2\geq 0$.  

However, extending these low-rank matrix recovery approaches to tensors with $d\geq 3$ is not straightforward and sometimes challenging. 
Similar to matrix, researchers look for efficient ways to decompose high order tensor so that we encode tensor in low dimension. Different decompositions for high order ($d\ge3$) tensors lead to different definitions of rank, such as the CP-rank, the tubal rank, and the multilinear rank. 
\begin{itemize}
\item 
The CP-rank of $\mathcal{X}\in\R^{n_1\times n_2\times\cdots\times n_d}$ is the smallest number of rank one tensors that sum up to $\mathcal{X}$ ~\cite{kolda2009tensor}. More precisely, $\mathcal{X}$ is of CP-rank $r$ if it can be decomposed as in the following with the minimum possible integer $r$
$$
\mathcal{X}=\sum_{i=1}^{r}b_i\bm{v}^{(1)}_i\otimes\bm{v}^{(2)}_i\otimes\ldots\otimes\bm{v}^{(d)}_i,
$$
where $b_i\in\mathbb{R}$ are coefficients, $\bm{v}_i^{(j)}\in\mathbb{R}^{n_j}$ are unit vectors, and $\bm{v}^{(1)}\otimes\bm{v}^{(2)}\otimes\ldots\otimes\bm{v}^{(d)}\in\R^{n_1\times n_2\times\cdots\times n_d}$ is a tensor with $(k_1,\ldots,k_d)$-th entry $v^{(1)}_{k_1}v^{(2)}_{k_2}\ldots v^{(d)}_{k_d}$. The CP-rank is a natural generalization of matrix rank to tensor, and it fully exploits the multilinear structure of tensors along all $d$ directions. However, the low-CP-rank approximation of a given tensor is very ill-posed \cite{de2008tensor}, especially for the degenerate case. As a consequence, low-CP-rank tensor recovery problem is extremely difficult to solve. Existing works are either computationally intractable (e.g. \cite{yuan2016tensor}) or applicable to only special low-CP-rank tensors (e.g. \cite{shah2015optimal}). 
\item 
The tubal rank \cite{kilmer2013third} of a 3-D tensor $\mathcal{X}\in\R^{n_1\times n_2\times n_3}$ is the same as
$$
\mbox{Tubal-rank}(\mathcal{X})=\max_{1\leq i\leq n_3}\mathrm{rank}([\mathscr{F}_3\mathcal{X}]_{:,:,i}),
$$
where $\mathscr{F}_3~:~\R^{n_1\times n_2\times n_3}\to\mathbb{C}^{n_1\times n_2\times n_3}$ is the discrete Fourier transform along the third direction, $[\cdot]_{:,:,i}$ is the $i$-th slice matrix in the third direction, and $\mathrm{rank}(\cdot)$ is the matrix rank. It is essentially a matrix rank in the Fourier domain. Though the exact best low-tubal-rank approximation of a 3-D tensor can be done by t-SVD \cite{kilmer2013third} and low-tubal-rank tensor recovery has a nice theory \cite{lu2018exact}, the tubal rank does not take full advantage of the multi-linear structure --- it utilizes only the linear dependencies in the slice matrices and it defines only for 3-D tensors. In practice, it usually underperforms other low rank tensor models in terms of data representation efficiency.
\item  
The multilinear rank of $\mathcal{X}\in\R^{n_1\times n_2\times\cdots\times n_d}$ is a tuple of $d$ integers $\bm{r}=(r_1,\ldots,r_d)$ defined via the ranks of the matricizations of $\mathcal{X}$~\cite{de2000multilinear}, and its associated Tucker decomposition is
\begin{equation}\label{eq:Tucker1}
\mathcal{X}=\sum_{i_1=1}^{r_1}\ldots\sum_{i_d=1}^{r_d}b_{i_1\ldots i_d}\bm{v}^{(1)}_{i_1}\otimes\bm{v}^{(2)}_{i_2}\otimes\ldots\otimes\bm{v}^{(d)}_{i_d},
\end{equation}
where $\mathcal{B}=[b_{i_1\ldots i_d}]_{(i_1,\ldots, i_d)=(1,\ldots,1)}^{(r_1,\ldots,r_d)}\in\R^{r_1\times \ldots\times r_d}$ is the core tensor, and $\bm{V}^{(j)}=[\bm{v}^{(j)}_{1},\ldots,\bm{v}^{(j)}_{r_j}]\in\R^{n_j\times r_j}$, $j=1,\ldots,d$, are orthogonal matrices. Similar to the CP-rank decomposition, the Tucker multilinear decomposition fully explores the low dimensional structure along all directions. Furthermore, the low-multilinear-rank approximation can be computed via the truncated higher-order (HOSVD) \cite{de2000multilinear}. 
\end{itemize}

Comparing these different low-rank tensor models, the low-multilinear-rank tensor model has both advantages of the low-CP-rank and low-tubal-rank tensor models, and it avoids drawbacks of both models. On the one hand, the multilinear rank makes full use of the multilinear structure of the tensor as in the CP rank. On the other hand, the low-rank approximation under multilinear rank can be computed efficiently as in the tubal rank. Therefore, in this paper, we consider the low-rank tensor recovery problem \eqref{eq:mea} and \eqref{eq:meaA} under the assumption that the unknown tensor $\mathcal{T}$ has a low multilinear rank. In particular, denote $\mbox{mulrank}(\cdot)$ the multilinear rank of a tensor, and we solve
\begin{equation}\label{eq:tensorrecovery}
\mbox{Recover $\mathcal{T}$ from \eqref{eq:mea} and \eqref{eq:meaA} subject to $\mbox{mulrank}(\mathcal{T})=\bm{r}:=(r_1,r_2,\ldots,r_d)$.}
\end{equation}

Many provable approaches are developed and analyzed for solving the tensor recovery problem \eqref{eq:tensorrecovery}; see, e.g., \cite{mu2014square,huang2014provable,xia2017polynomial,yuan2016tensor}. 
Denote $n=\max\{n_1,\ldots,n_d\}$ and $r=\max\{r_1,\ldots,r_d\}$. Since the multilinear rank is the ranks of matricizations of a tensor, one can minimize the sum of matrix nuclear norms of matricizations to obtain a low multilinear rank, which is investigated in \cite{mu2014square,huang2014provable}.  Under different random sampling schemes, it was shown theoretically \cite{mu2014square,huang2014provable} that the sampling complexity (i.e., the minimum $m$ for a successful tensor recovery) is $O(n^{d-1}r)$, which is improved to $O(n^{\lceil d/2\rceil} r^{\lfloor d/2\rfloor})$ by square reshaping \cite{mu2014square}. These results are consistent with those when the low-rank tensor is unfolded as a low-rank matrix, due to the fact that these approaches are essentially matrix nuclear norm minimization. By developing a series of algebraic and probabilistic techniques, Yuan and Zhang~\cite{yuan2016tensor} proposed a tensor nuclear norm minimization for tensor completion, a special case of low-multilinear-rank tensor recovery where entries of the tensor are sampled. When $d=3$, it is proved that the tensor can be recovered exactly with high probability with entries as few as $O((n^{3/2}r^{1/2}+nr^2)\log^2 n)$. However, this tensor nuclear norm minimization is computationally intractable. Later, a non-convex approach \cite{xia2017polynomial} is developed to directly attack the tensor completion problem, and it shows that the tensor can be reconstructed from sampled entries as few as $O(n^{3/2}r^{7/2}\log^{7/2}n+nr^7\log^6n)$ with a polynomial computational complexity. A summary of existing provable low-multilinear-rank tensor recovery methods is shown in Table~\ref{table:tensor_m}, where for simplicity the sampling complexity is counted for tensors in $\R^{n\times n\times n}$ with rank $(r,r,r)$.

\begin{table}[ht]
\centering
\begin{tabular}{cccc}
\hline \hline
 & Optimization & Sampling Scheme & Sampling Complexity    \\
 \hline
  this paper & Non-Convex & Gaussian measurement &  $O(nr^2+r^4)$ \\
  \cite{mu2014square} & Convex & Gaussian measurement & $O(n^2r)$    \\
  \cite{huang2014provable} & Convex & Entry sampling & $O(n^2r\log^2(n))$     \\
  \cite{xia2017polynomial} & Non-Convex &  Entry sampling &  $O(n^{3/2}r^{7/2}\log^{7/2}n+nr^7\log^6n)$ \\
  \cite{yuan2016tensor}& Convex & Entry sampling &$O((n^{3/2}r^{1/2}+nr^2)\log^2(n))$\\
 \hline
\end{tabular}
\caption{Summary of provable low-multilinear-rank tensor recovery approaches. Here the sampling complexity is shown for tensors in $\R^{n\times n\times n}$ with multilinearrank $(r,r,r)$, whose degree of freedoms is $O(nr^2+r^3)$.}
\label{table:tensor_m}
\end{table}

The sampling complexity of these provable approaches is still unsatisfied. They are unnecessarily large in the tensor dimension. Take order-$3$ tensors as examples. The sampling complexity of existing approaches is at least $O(n^{3/2})$ in $n$. As a comparison, the degree of freedom of the low-multilinear-rank tensor is $O(nr^2+r^3)$, which is only linear in $n$. To handle real-world applications where the dimension $n$ is usually huge, it is in demand to develop provable tensor recovery algorithms with sampling complexity linear in $n$, which is the main theme of this paper.

In this paper, we develop an efficient provable polynomial-time approach for low-multilinear-rank tensor recovery with sampling complexity linear in $n$. Our approach reaches the optimal sampling complexity in $n$ (but not in $r$), and we call it a near-optimal sampling complexity. Furthermore, our approach has a very low computational complexity. The success of our proposed approach relies on the following two ingredients. 
\begin{itemize}
\item The computation of our approach is done by the \emph{Riemannian Gradient Descent (RGrad)} algorithm. We reformulate \eqref{eq:tensorrecovery} as a constrained least squares as in the following
\begin{equation}
    \min\limits_{\mathcal{X}\in\mathbb{R}^{n_1\times n_2\times\cdots\times n_d}}\frac{1}{2}\norm{\A\mathcal{X}-\bm{y}}_2^2~~
    \text{s.t.}~~\mbox{mulrank}(\mathcal{X})=\rr.
    \label{eq:recovery0}
\end{equation}
Since the set of all multilinear-rank-$\rr$ tensors forms a Riemannian manifold embedded in $\R^{n_1\times\ldots\times n_d}$ \cite{kressner2014low}, it is natural to apply the RGrad algorithm to solve \eqref{eq:recovery0}. The empirical performance of RGrad algorithm has been studied in \cite{kressner2014low} for tensor recovery without a theoretical guarantee. Following \cite{wei2016guarantees,wei2020guarantees}, we interpret the RGrad algorithm as an iterative hard thresholding (IHT) algorithm \cite{blumensath2009iterative,tanner2013normalized,blanchard2015cgiht,kyrillidis2014matrix,rauhut2017low} with subspace projection. Consequently, RGrad can avoid large scale HOSVD \cite{de2000multilinear} at each iteration, and only HOSVD for small tensors of size $O(r_1\ldots r_d)$ is needed. Therefore, the computational complexity is very low.

\item The theoretical analysis of our approach is done with the help of \emph{tensor restricted isometry property (TRIP)} \cite{rauhut2017low}. It is obvious that \eqref{eq:recovery} is a non-convex optimization and the underlying tensor $\T$ is a global minimizer. A crucial issue is that whether or not the RGrad algorithm converges to the global minimizer $\T$ with a small $m$. Similar to \cite{wei2016guarantees}, we will prove that, if $\A$ satisfies TRIP, the RGrad algorithm with a special initialization converges linearly to $\T$. TRIP was first presented in \cite{rauhut2017low} for the analysis of IHT for tensor recovery, and it is also provided there that $\A$ generated by subgaussian tensors or randomized Fourier transforms satisfies TRIP with almost optimal $m$. As a consequence, our main result reveals that the minimum $m$ for a successful tensor recovery by RGrad algorithm could be as small as $O(nr^2d+r^{d+1})$ in $n$ and $r$. In particular, for an order-3 tensor, the number of samples required is $O(nr^2+r^4)$.
\end{itemize}
 
The rest of the paper is organized as follows. We provide some preliminaries of tensors in Section \ref{sec:preliminaries}. Section \ref{sec:alg} presents Riemannian gradient descent and  related algorithms for tensor recovery. Our main results are summurized in Section \ref{sec:theory} with proofs in Section \ref{sec:proofs}. Numerical experiments and conclusion are in the remaining part. 

\section{Notations and Tensor Preliminaries}\label{sec:preliminaries}
In this section, we introduce some preliminaries of tensors. Throughout this paper, tensors are denoted by capital calligraphic letters, matrices by bold capital letters, and vectors by bold lower case letters. For example, $\mathcal{X}\in\mathbb{R}^{n_1\times n_2\times\cdots\times n_d}$ is a real $d$-th order tensor, $\bm{Y}\in\mathbb{R}^{n_1\times n_2}$ is a real $n_1\times n_2$ matrix, and $\bm{z}\in\R^{n}$ is a real length-$n$ vector. 
For $(j_1,\ldots,j_d)$-th entry of a tensor $\mathcal{X}$, we use either $x_{j_1\ldots j_d}$ or $[\mathcal{X}]_{j_1\ldots j_d}$.  Similar notations hold for matrices and vectors. Linear operators on tensors are denoted by script letters. In particular, $\mathscr{I}$ is the identity operator. The operator norm of a linear operator $\mathscr{B}$ is denoted by $\|\mathscr{B}\|$. Multi-indices are in bold face letters such as $\bm{i}$, and we also use the notation $[d]:=\{1,\ldots,d\}$. For two length-$d$ multi-indices $\bm{i}$ and $\bm{j}$, we use $\bm{i}\preceq\bm{j}$ to indicate that $i_k\leq j_k$ for all $k\in[d]$. 


\subsection{Tensor Operations}
Some basic tensor operations are listed in below.
\begin{itemize}
\item \emph{Tensor inner product and norm.} The inner product of two tensors $\mathcal{X},\mathcal{Z}\in\mathbb{R}^{n_1\times n_2\times\cdots\times n_d}$ is the sum of the products of their entries, i.e.,
$$
\langle\mathcal{X,Z}\rangle = \sum\limits_{j_1=1}^{n_1}\sum\limits_{j_2=1}^{n_2}\cdots\sum\limits_{j_d=1}^{n_d}x_{j_1\cdots j_d}z_{j_1\cdots j_d}.
$$
The Frobenius norm of a tensor $\mathcal{X}$ is $\|\mathcal{X}\|_F=\sqrt{ \langle \mathcal{X},\mathcal{X} \rangle}$.

\item \emph{Tensor matricization.} Tensor matricization \cite{kolda2009tensor} is to transform or flattern a tensor into a matrix. The mode-$i$ matricization of a tensor $\mathcal{X}\in\mathbb{R}^{n_1\times n_2\times\cdots\times n_d}$ is denoted by $\mathcal{X}_{(i)}$. It transforms the tensor $\mathcal{X}$ into a matrix $\mathcal{X}_{(i)}$ of size $n_i\times\prod_{j=1, j\neq i}^{d} n_j$ described below. Let $[\mathcal{X}]_{:,\ldots,:,j,:,\ldots,:}\in\R^{n_1\times\ldots\times n_{i-1}\times n_{i+1}\times\ldots\times n_d}$ be the $j$-th mode-$i$ slice of $\mathcal{X}$, i.e., the sub-tensor by fixing the $i$-th index to be $j$. Then 
$$
\mathcal{X}_{(i)}=\left[\begin{matrix}
\mathrm{vec}([\mathcal{X}]_{:,\ldots,:,1,:,\ldots,:}) & \mathrm{vec}([\mathcal{X}]_{:,\ldots,:,2,:,\ldots,:}) &\ldots &\mathrm{vec}([\mathcal{X}]_{:,\ldots,:,n_i,:,\ldots,:})
\end{matrix}\right]^T,
$$
where $\mathrm{vec}(\cdot)$ stands for the vectorization, and $\cdot^T$ is the transpose of a matrix.

\item \emph{Mode-$i$ tensor multiplication.} The mode-$i$ product of a tensor $\mathcal{X}\in\mathbb{R}^{n_1\times n_2\times\cdots\times n_d}$ with a matrix $\V\in\mathbb{R}^{p\times n_i}$ is denoted by $\mathcal{X}\times_i \V$, and it is of size $n_1\times\cdots\times n_{i-1}\times p\times n_{i+1}\times\cdots\times n_d$ with entries
$$
    [\mathcal{X}\times_i \V]_{j_1\cdots j_{i-1}kj_{i+1}\cdots j_d}=\sum\limits_{l=1}^{n_i}x_{j_1\cdots j_{i-1}l j_{i+1}\cdots j_d}u_{kl}.
$$
Therefore, the mode-$i$ multiplication is obtaining linear combinations of mode-$i$ slices. So the mode-$i$ tensor multiplication can be rewritten into a standard matrix multiplication in mode-$i$ matricizations, i.e.,
\begin{equation}\label{eq:modeimul1}
    \mathcal{Y}=\mathcal{X}\times_i \V~\Longleftrightarrow~\mathcal{Y}_{(i)}=\V\mathcal{X}_{(i)},
\end{equation}
which implies
$$
\mathcal{X}\times_i\bm{V}\times_i\bm{W}=\mathcal{X}\times_i(\bm{W}\bm{V})
$$
Furthermore, it is easy to see that, for $i\neq j$,
$$
\mathcal{X}\times_i\bm{V}\times_j\bm{W}=\mathcal{X}\times_j\bm{W}\times_i\bm{V}.
$$
In other words, mode-$i$ and mode-$j$ multiplications are commutable for $i\neq j$. For this reason, for an index set $\mathbb{I}=\{i_1,\ldots,i_p\}$, we denote
$$
\mathcal{X}\times_{i_1}\bm{V}^{(i_1)}\ldots\times_{i_p}\bm{V}^{(i_p)}
:=\mathcal{X}\times_{i\in\mathbb{I}}\bm{V}^{(i)}.
$$
Furthermore, it has been shown \cite{kolda2009tensor} that, for matrices $\bm{V}^{(j)}$, $j=1,\ldots,d$, and tensors $\mathcal{X}$ and $\mathcal{Y}$ of suitable sizes,
\begin{equation}\label{eq:modeproduct}
    \mathcal{Y}=\mathcal{X}\times_{j\in[d]}\V^{(j)}
    \Longleftrightarrow
     \mathcal{Y}_{(i)}
    =\V^{(i)}\mathcal{X}_{(i)}\left(\V^{(d)}\otimes \cdots\otimes \V^{(i+1)}\otimes \V^{(i-1)}\cdots\otimes \V^{(1)} \right)^T 
\end{equation}
Here $\otimes$ means the Kronecker product of matrices.  
\end{itemize}

\subsection{Multilinear Rank and Tensor Manifold}
A tensor $\mathcal{X}\in\R^{n_1\times\ldots\times n_d}$ is said of multilinear rank $\bm{r}=(r_1,\ldots,r_d)$ if $\mathrm{rank}(\mathcal{X}_{(i)})=r_i$ for $i=1,\ldots,d$ \cite{kolda2009tensor}. In other words,
$$
\mbox{mulrank}(\mathcal{X})=\bm{r}
\quad\Longleftrightarrow\quad
\mathrm{rank}(\mathcal{X}_{(i)})=r_i,~~i=1,\ldots,d.
$$
Let $\bm{V}^{(i)}\in\R^{n_i\times r_i}$ be an orthonomal basis of the column space of $\mathcal{X}_{(i)}$ for $i=1,\ldots,d$ respectively. Then, $\mathcal{X}$ can be rewritten into a Tucker decomposition form
\begin{equation}\label{eq:Tucker2}
\mathcal{X}=\mathcal{B}\times_{i\in[d]}\V^{(i)},
\end{equation}
where $\mathcal{B}\in\R^{r_1\times\ldots\times r_d}$ has a full multilinear rank and is called the core tensor. This factorization \eqref{eq:Tucker2} is exactly the same as \eqref{eq:Tucker1}.

The collection of tensors of multilinear rank $\bm{r}$ forms a smooth embedded submanifold of $\mathbb{R}^{n_1\times\cdots\times n_d}$, denoted by $\mathbb{M}_{\bm{r}}$, i.e.,
$$
\mathbb{M}_{\bm{r}}=\{\mathcal{X}\in\R^{n_1\times\ldots\times n_d}~|~\mbox{mulrank}(\mathcal{X})=\bm{r}\}.
$$
The structure of this manifold is studied in \cite{kressner2014low,koch2010dynamical}. Let $\mathcal{X}\in\mathbb{M}_{\bm{r}}$ be with Tucker factorization \eqref{eq:Tucker2}. By differentiating the factors of \eqref{eq:Tucker2},  we obtain the tangent space of $\mathbb{M}_{\bm{r}}$ at $\mathcal{X}$ as
\begin{equation}\label{eq:tangent}
\begin{split}
\S_{\mathcal{X}}=\left\{\dot{\mathcal{B}}\times_{i\in[d]}\V^{(i)}
+\sum_{i=1}^d\mathcal{B}\times_{j\in[d]\setminus i}\V^{(j)}\times_{i}\dot{\bm{V}}^{(i)}\Big|~\dot{\mathcal{B}}\in\mathbb{R}^{r_1,\times\ldots\times r_d},\quad\dot{\V}^{(i)}\in\mathbb{R}^{n_i\times r_i},~(\dot{\V}^{(i)})^T\V^{(i)}=\bm{0},~i=1,\ldots,d.\right\}
\end{split}
\end{equation}

We can see that the $d+1$ components in the first line of \eqref{eq:tangent} are orthogonal to each others. Therefore, the dimension of the tangent space (hence the manifold) is total number of free parameters in \eqref{eq:tangent}, i.e.,
$$
\dim(\mathbb{M}_{\bm{r}})=\prod_{i=1}^d r_i+\sum_{j=1}^d(r_jn_j-r_j^2).
$$
Thus, the degree of freedoms in a multilinear-rank-$\bm{r}$ tensor is $O(d(nr-r^2)+r^d)$ if $n=\max\{n_1,\ldots,n_d\}$ and $r=\max\{r_1,\ldots,r_d\}$. 

By re-orthogonalization of $[\V^{(i)}~\dot{\V}^{(i)}]\in\mathbb{R}^{n_i\times 2r_i}$ to obtain an orthogonal matrix $\bm{W}^{(i)}\in\mathbb{R}^{n_i\times 2r_i}$ for $i=1,\ldots,d$, we see that any tensor $\mathcal{Y}\in\S_{\mathcal{X}}$ can be rewritten into a multilinear factorization form as $\mathcal{Y}=\widetilde{\mathcal{B}}\times_{i\in[d]}\bm{W}^{(i)}$, where $\widetilde{\mathcal{B}}\in\mathbb{R}^{2r_1\times\ldots\times2r_d}$ is a core tensor containing $\mathcal{B}$, $\dot{\mathcal{B}}$,  and the re-orthogonalization coefficients. Therefore, tensors in tangent spaces of $\mathbb{M}_{\bm{r}}$ always have a multilinear tensor at most $2\bm{r}$.

\section{Algorithms}\label{sec:alg}
We reformulate the low-mulitilinear-rank tensor recovery problem \eqref{eq:tensorrecovery} as the constrained least squares \eqref{eq:recovery0}, i.e.,
\begin{equation}
    \min\limits_{\mathcal{X}\in\mathbb{M}_{\bm{r}}}\frac{1}{2}\norm{\A\mathcal{X}-\bm{y}}_2^2.
    \label{eq:recovery}
\end{equation}
Obviously, the underlying unknown tensor $\mathcal{X}$ is a global minimizer of \eqref{eq:recovery} as long as $\A$ is injective on $\mathbb{M}_{\bm{r}}$. Therefore, solving the low-multilinear-rank tensor recovery problem \eqref{eq:tensorrecovery} is equivalent to finding a global minimizer of \eqref{eq:recovery}. In this section, we present algorithms for \eqref{eq:recovery}. In particular, an iterative hard thresholding (IHT) algorithm and a Riemannian gradient descent (RGrad) algorithm will be introduced.
 
\subsection{Iterative Hard Thresholding (IHT)}
A natural solver for constrained optimization is the projected gradient descent, which applies to \eqref{eq:recovery} to yield
\begin{equation}\label{eq:PGD}
\mathcal{T}_{l+1}=\P_{\mathbb{M}_{\bm{r}}}\left(\mathcal{T}_l-\alpha_l\A^{*}(\A\mathcal{T}_l-\bm{y})\right),
\end{equation}
where $\P_{\mathbb{M}_{\bm{r}}}$ is the projection onto the multilinear-rank-$\bm{r}$ tensor manifold $\mathbb{M}_{\bm{r}}$, and $\alpha_l$ is a step size. If $\mathbb{M}_{\bm{r}}$ is replaced by the set of sparse vectors and the set of low-rank matrices respectively, the algorithm is widely used in compress sensing and low rank matrix recovery, known as iterative hard thresholding (IHT) \cite{blumensath2009iterative,tanner2013normalized,wei2016guarantees, goldfarb2011convergence}. 

The iteration \eqref{eq:PGD} does not become a practical algorithm unless $\P_{\mathbb{M}_{\bm{r}}}$ can be obtained easily. Unfortunately, it is quite difficult to have an efficient algorithm for computing the exact $\P_{\mathbb{M}_{\bm{r}}}(\mathcal{Y})$ for a given tensor $\mathcal{Y}$. Instead we then compute an approximation of  $\P_{\mathbb{M}_{\bm{r}}}(\mathcal{Y})$. There are several possible strategies. Here we use the one based on the truncated Higher-order SVD (HOSVD) \cite{tucker1966some,de2000multilinear}, presented in Algorithm \ref{alg:HOSVD}.

\begin{algorithm}[h]
\KwIn{A tensor $\mathcal{Y}\in\mathbb{R}^{n_1\times\cdots\times n_d}$ and the target rank $\bm{r}=(r_1, r_2, \dots, r_d)$}
\For{$i=1,\cdots, d$}{
$\V^{(i)}\gets r_i$ dominant left singular vectors of $\mathcal{Y}_{(i)}$
}
Compute the truncated core tensor: $\mathcal{B}=\mathcal{Y}\times_{i\in[d]}(\V^{(i)})^T$\\
\KwResult{$\mathscr{H}_{\bm{r}}(\mathcal{Y})=\mathcal{B}\times_{i\in[d]}\V^{(i)}$}
\caption{Truncated Higher-order SVD (HOSVD)~\cite{tucker1966some,de2000multilinear}}
\label{alg:HOSVD}
\end{algorithm}

In the truncated HOSVD $\mathscr{H}_{\bm{r}}(\mathcal{Y})$ of Algorithm \ref{alg:HOSVD},  we compute the $r_i$ principal components $\V^{(i)}=[\bm{v}^{(i)}_{1}~\ldots~\bm{v}^{(i)}_{r_i}]\in\mathbb{R}^{n_i\times r_i}$ of mode-$i$ matricizations of $\mathcal{Y}$ for all $i$, and then $\mathcal{Y}$ is projected into the span of the orthogonal set 
$$
\left\{\bm{v}^{(1)}_{j_1}\otimes\ldots\otimes\bm{v}^{(d)}_{j_d}~|~j_i\in[r_i],~i\in[d].\right\}.
$$
The computational cost of Algorithm \ref{alg:HOSVD} is $d$ matrix SVDs of size $n_i\times\left(\prod_{j\neq i}n_j\right)$, $i=1,\ldots,d$, respectively. It can be shown that the truncated HOSVD $\mathscr{H}_{\bm{r}}$ is indeed a good approximation of $\P_{\mathbb{M}_{\bm{r}}}$ in the sense that
\begin{equation}\label{eq:quasibest}
\|\mathcal{Y}-\mathscr{H}_{\bm r}(\mathcal{Y})\|_F\le \sqrt{d} \|\mathcal{Y}-\P_{\mathbb{M}_{\bm{r}}}(\mathcal{Y})\|_F,
\qquad
\forall~\mathcal{Y}\in\R^{n_1\times\ldots\times n_r}.
\end{equation}
We call $\mathscr{H}_{\bm r}$ a quasi-projection, as it produces a quasi-optimal low-multilinear-rank approximation. Some other strategies are also available. For example, one can use a successive version of truncated HOSVD, where columns of $\V^{(i)}$ are the $r_i$ leading singular vectors of $\left(\mathcal{Y}\times_{j\in[i-1]}(\V^{(j)})^T\right)_{(i)}$; and the computational cost is lower than the standard truncated HOSVD while still achieving a quasi-optimal low-multilinear-rank approximation as in \eqref{eq:quasibest}.

When the exact projection $\P_{\mathbb{M}_{\bm{r}}}$ is replaced by the quasi-projection $\mathscr{H}_{\bm r}$, we obtain a practical algorithm
\begin{equation}\label{eq:IHT}
\mathcal{T}_{l+1}=\mathscr{H}_{\bm{r}}\left(\mathcal{T}_l-\alpha_l\A^{*}(\A\mathcal{T}_l-\bm{y})\right).
\end{equation}
This algorithm is developed in \cite{rauhut2017low} as an Iterative Hard Thresholding (IHT) algorithm for low-rank Tensor recovery. Since the objective function in \eqref{eq:recovery} is quadratic, an exact line search step size is available. Altogether, we obtain the tensor Normalized IHT (NIHT) algorithm \cite{rauhut2017low} shown in Algorithm \ref{alg:niht}. In the formula for step size, we used the linear projection operator $\mathscr{J}_l$. In particular, let 
$$
\T_l=\mathcal{C}_l\times_{i\in[d]}\U_l^{(i)}
$$
be the multilinear factorization of $\T_l$, where $\mathcal{C}_l\in\R^{r_1\times\ldots\times r_d}$ is the core tensor of $\T_l$, and $\U_l^{(i)}$, $i=1,\ldots,d$, are the orthogonal factors of $\T_l$. Then $\mathscr{J}_l$ is the projection into the subspace of tensors whose mode-$i$ fibres are spanned by $\U_l^{(i)}$, i.e.,
$$
\mathscr{J}_l\mathcal{G}_l
=\mathcal{G}_l\times_{i\in[d]}(\U_l^{(i)} (\U_l^{(i)})^T)
$$
 
\begin{algorithm}[h]
\textbf{Initialization}: $\T_0$ (usually $\T_0=0$ )\\
\For{$l=0,1,\cdots$}{
$\mathcal{G}_l =\A^{*}(y-\A(\mathcal{T}_l))$\\
$\mu_l=\frac{\|\mathscr{J}_l\mathcal{G}_l\|_F^2}{\|\A\mathscr{J}_l\mathcal{G}_l\|_2^2}$\\
$\mathcal{V}_l=\mathcal{T}_l+\mu_l\mathcal{G}_l$\\
$\mathcal{T}_{l+1}=\mathscr{H}_{\bm{r}}(\mathcal{V}_l)$
}
\caption{Tensor Normalized iterative hard thresholding (NIHT)~\cite{rauhut2017low}}
\label{alg:niht}
\end{algorithm}

Under the tensor restricted isometric property (TRIP) and some other assumptions, NIHT is shown to be convergent \cite{rauhut2017low} to the global minimizer $\mathcal{T}$. However, the verification of a key inequality in the proof is missing in \cite{rauhut2017low}. Therefore, the recovery guarantee of IHT type algorithms is still uncertain. 

\subsection{IHT with Subspace Projection and Riemannian Gradient Descent (RGrad)}\label{sec:algrgrad}
The tensor IHT algorithm \eqref{eq:IHT} suffers from the following two issues. Firstly, the computational cost of IHT is high, because the truncated HOSVD of an $n_1\times \ldots \times n_d$ tensor needs to be computed per iteration. Secondly, the recovery guarantee of IHT is still incomplete and the sampling complexity is still unknown. To overcome these drawbacks, we use the Riemannian gradient descent (RGrad) algorithm to solve \eqref{eq:recovery}, as the set $\mathbb{M}_{\bm{r}}$ of all multilinear-rank-$\bm{r}$ tensors forms a smooth embedded manifold in $\R^{n_1\times\ldots\times n_d}$. We will see later that the RGrad algorithm not only achieves a low computational complexity by avoiding large scale truncated HOSVD, but also has a theoretical guarantee with a sampling complexity $m\sim O(dnr^2+r^{d+1})$.

For simplicity and following \cite{wei2016guarantees, wei2020guarantees}, we present the GRrad algorithm algebraically rather than geometrically. More precisely, we interpret the GRrad algorithm as an IHT algorithm with subspace projection. To improve the computational efficiency of IHT algorithms, our idea is to relief the computation of $\mathscr{H}_{\bm{r}}$, the most computational consuming step in the IHT algorithm \eqref{eq:IHT}. Let $\T_l$ be the estimation of $\T$ at the $l$-th iteration. Since $\mathscr{H}_{\bm{r}}$ is a quasi-projection onto $\mathbb{M}_{\bm{r}}$, which can be approximated well by its tangent space $\S_l$ at $\T_l$, we use the projection $\mathscr{P}_{\S_l}$ onto $\S_l$ to approximate $\mathscr{H}_{\bm{r}}$ in \eqref{eq:IHT}.  
However, the multilinear rank of tensors in $\S_l$ is $2\bm{r}$ rather than $\bm{r}$. Therefore, we then use the truncated HOSVD $\mathscr{H}_{\bm{r}}$ to trim the multilinear rank back to $\bm{r}$. Altogether, the IHT with subspace projection becomes
\begin{equation}\label{eq:IHTWSP}
\mathcal{T}_{l+1}=\mathscr{H}_{\bm{r}}\mathscr{P}_{\S_l}\left(\mathcal{T}_l-\alpha_l\A^{*}(\A\mathcal{T}_l-\bm{y})\right).
\end{equation}
Since $\T_l\in\S_l$, the algorithm \eqref{eq:IHTWSP} can be rewritten as
\begin{equation}\label{eq:RGrad}
\mathcal{T}_{l+1}=\mathscr{H}_{\bm{r}}\left(\mathcal{T}_l-\alpha_l\mathscr{P}_{\S_l}\A^{*}(\A\mathcal{T}_l-\bm{y})\right).
\end{equation}
This is exactly the Riemannian gradient descent (RGrad) algorithm for solving \eqref{eq:recovery}, since $\mathscr{P}_{\S_l}\A^{*}(\A\mathcal{T}_l-\bm{y})$ is exactly the Riemannian manifold gradient of the objective function in \eqref{eq:recovery} on $\mathbb{M}_{\bm{r}}$. The operator $\mathscr{H}_{\bm{r}}$ in \eqref{eq:RGrad} serves as a retraction operator. The RGrad algorithm is summarized in Algorithm \ref{alg:Rgrad}. RGrad algorithms have been studied empirically for low-rank tensor recovery in \cite{kressner2014low}. However, it is still unknown whether or not RGrad converges to the global minimizer, and the sampling complexity is not clear.

\begin{algorithm}[h]
\textbf{Initialization}: $\T_0=\mathscr{H}_{\rr}(\A^{*}\bm{y})$\\
\For{$l=0,1,\cdots$}{
$\mathcal{G}_l = \A^*(\A\T_l-\bm{y})$\\
$\alpha_l = \frac{\|\P_{\S_l}\mathcal{G}_l\|_F^2}{\|\A\P_{\S_l}\mathcal{G}_l\|_2^2}$\\
$\mathcal{W}_l=\T_l-\alpha_l\P_{\S_l}\mathcal{G}_l$\\
$\T_{l+1}=\mathscr{H}_r(\mathcal{W}_l)$
}
\caption{Riemannian Gradient Descent (IHT with Subspace Projection)}
\label{alg:Rgrad}
\end{algorithm}

At the first glance, since we added one more operator $\mathscr{P}_{\S_l}$ into \eqref{eq:IHT}, the RGrad algorithm \eqref{eq:IHTWSP} (or \eqref{eq:RGrad}) seems more computational demanding than the IHT algorithm \eqref{eq:IHT}. However, this is not the truth. Actually, because tensors in $\S_l$ has a multilinear-rank at most $2\bm{r}$, $\mathscr{H}_{\bm{r}}\mathscr{P}_{\S_l}$ together in \eqref{eq:IHTWSP} is much easier to compute than $\mathscr{H}_{\bm{r}}$ solely in \eqref{eq:IHT}. This can be seen from the detailed implementation of \eqref{eq:IHTWSP}, which is divided into the following two paragraphs.

\paragraph{Computation of $\mathscr{P}_{\S_l}$.}
As shown in Algorithm \ref{alg:Rgrad}, denote $\mathcal{G}_l=\A^*(\A\T_l-\bm{y})$. We need to compute $\mathscr{P}_{\S_l}\mathcal{G}_l$. Let the HOSVD of $\T_l$ be
$$
\T_l=\mathcal{C}_l\times_{i\in[d]}\bm{U}_l^{(i)},
$$ 
where $\mathcal{C}_l\in\R^{r_1\times\ldots\times r_d}$ is the core tensor and $\bm{U}_l^{(i)}\in\R^{n_i\times r_i}$ is the $i$-th orthogonal factor for $i=1,\ldots,d$. According to \eqref{eq:tangent}, $\mathscr{P}_{\S_l}\mathcal{G}_l$ must be in the form of
\begin{equation}\label{eq:projtan}
\mathscr{P}_{\S_l}\mathcal{G}_l=\mathcal{D}_l\times_{i\in[d]}\U_l^{(i)}
+\sum_{i=1}^d\mathcal{C}_l\times_{j\in[d]\setminus i}\U_l^{(j)}\times_{i}\bm{W}_l^{(i)},
\end{equation}
where $\mathcal{D}_l\in\R^{r_1\times\ldots\times r_d}$ is arbitrary, and $\bm{W}_l^{(i)}\in\R^{n_i\times r_i}$ satisfies $(\bm{W}_l^{(i)})^T\U_l^{(i)}=\bm{0}$ for $i=1,\ldots,d$. The summands in the right hand side of \eqref{eq:projtan} are orthogonal to each other, so that they can be obtained independently by solving the least squares 
$$
\mathcal{D}_l=\underset{\mathcal{D}\in\mathbb{R}^{r_1,\times\ldots\times r_d}}{\mathrm{argmin}}\left\|\mathcal{G}_l-\mathcal{D}\times_{i\in[d]}\U_l^{(i)}\right\|_F^2
$$
and
\begin{equation}\label{eq:LSinprojtan}
\bm{W}_l^{(i)}=\underset{\substack{\bm{W}\in\mathbb{R}^{n_i\times r_i}\\ \bm{W}^T\U_l^{(i)}=\bm{0}}}{\mathrm{argmin}}\left\|\mathcal{G}_l-\mathcal{C}_l\times_{j\in[d]\setminus i}\U_l^{(j)}\times_{i}\bm{W}\right\|_F^2
\end{equation}
for $i=1,\ldots,d$, respectively. The closed form solutions are given as follows.
\begin{itemize}
\item For $\mathcal{D}_l$: Let $\bm{u}_{l,j}^{(i)}$ be the $j$-th column of $\U_l^{(i)}$. Since $\left\{\bm{u}^{(1)}_{l,j_1}\otimes\ldots\otimes\bm{u}^{(d)}_{l,j_d}~|~j_i\in[n_i],~i\in[d]\right\}$ is an orthonormal set, we have $[\mathcal{D}_l]_{j_1\ldots j_d}=\langle\mathcal{G}_l,\bm{u}^{(1)}_{l,j_1}\otimes\ldots\otimes\bm{u}^{(d)}_{l,j_d}\rangle$, or equivalently,
\begin{equation}\label{eq:dBinprojtan}
\mathcal{D}_l=\mathcal{G}_l\times_{i\in[d]}(\U_l^{(i)})^T.
\end{equation}
This is can be done efficiently by matrix-vector products. The computational complexity is at most $O\left(\sum_{j=1}^d\left(\prod_{k=1}^{j}r_k\right)\left(\prod_{k=j}^{d}n_k\right)\right)\leq O(dn^dr)$.

\item For $\bm{W}_l^{(i)}$: Thanks to \eqref{eq:modeproduct}, we see that \eqref{eq:LSinprojtan} is equivalent to the following constrained matrix least squares problem
$$
\bm{W}_l^{(i)}=\underset{\substack{\bm{W}\in\mathbb{R}^{n_i\times r_i}\\ \bm{W}^T\U_l^{(i)}=\bm{0}}}{\mathrm{argmin}}\left\|(\mathcal{G}_l)_{(i)}-\bm{W}(\mathcal{C}_l)_{(i)}\left(\U_l^{(d)}\otimes \cdots\otimes \U_l^{(i+1)}\otimes \U_l^{(i-1)}\cdots\otimes \U_l^{(1)} \right)^T\right\|_F^2.
$$
It is not difficult to see that the closed form solution is
\begin{equation}\label{eq:dVinprojtan}
\begin{split}
\bm{W}_l^{(i)}&=\left(\bm{I}-\U_l^{(i)}(\U_l^{(i)})^T\right)(\mathcal{G}_l)_{(i)}\left(\U_l^{(d)}\otimes \cdots\otimes \U_l^{(i+1)}\otimes \U_l^{(i-1)}\otimes\cdots\otimes \U_l^{(1)} \right)(\mathcal{C}_l)_{(i)}^{\dag}\cr
&=\left(\mathcal{G}_l\times_{j\in[d]\setminus i} (\U_l^{(j)})^T\times_i(\bm{I}-\U_l^{(i)}(\U_l^{(i)})^T)
\right)_{(i)}(\mathcal{C}_l)_{(i)}^{\dag},
\end{split}
\end{equation}
where $(\mathcal{C}_l)_{(i)}^{\dag}$ is the pseudo-inverse of $(\mathcal{C}_l)_{(i)}$. This is again done by matrix-vector products and a small matrix inversion. The computational complexity is at most $O(dn^dr+nr^d+r^{d+1})=O(dn^dr)$.
\end{itemize}

\paragraph{Computation of $\mathscr{H}_{\bm{r}}$.}
The same as in Algorithm \ref{alg:Rgrad}, we define $\mathcal{W}_l=\T_l-\alpha_l\mathscr{P}_{\S_l}\mathcal{G}_l$. To obtain the next estimate $\T_{l+1}$, we retract the tensor $\mathcal{W}_l$ from the tangent space $\S_l$ to the manifold $\mathbb{M}_{\rr}$ by $\T_{l+1}=\mathscr{H}_{\bm{r}}(\mathcal{W}_l)$. To implement the retraction efficiently, we use the fact that all tensors on tangent spaces of $\mathbb{M}_{\bm{r}}$ has a multilinear-rank at most $2\bm{r}$. By direct calculation,
\begin{equation}\label{eq:retraction}
\begin{split}
\mathcal{W}_l&=\T_l-\alpha_l\P_{\S_l}(G_l)=\mathcal{C}_l\times_{i\in[d]}\U_l^{(i)}-
\alpha_l\left(\mathcal{D}_l\times_{i\in[d]}\U_l^{(i)}
+\sum_{i=1}^d\mathcal{C}_l\times_{j\in[d]\setminus i}\U_l^{(j)}\times_{i}\bm{W}_l^{(i)}\right)\cr
&=(\mathcal{C}_l-\alpha_l\mathcal{D}_l)\times_{i\in[d]}\U_l^{(i)}-\alpha_l\sum_{i=1}^d\mathcal{C}_l\times_{j\in[d]\setminus i}\U_l^{(j)}\times_{i}\bm{W}_l^{(i)}:=\mathcal{L}_l\times_{i\in[d]}\left[\U_l^{(i)}~\bm{W}_l^{(i)}\right],
\end{split}
\end{equation}
where $\mathcal{L}_l\in\mathbb{R}^{2r_1\times\ldots\times 2r_d}$ is a block tensor whose $(1:r_1,\ldots,1:r_d)$ sub-tensor is $\mathcal{C}_l-\alpha_l\mathcal{D}_l$ and $(1:r_1,\ldots,1:r_{i-1},r_{i}+1:2r_i,1:r_{i+1},\ldots,1:r_d)$ sub-tensor is $-\alpha_l\bm{C}_l$ for $i=1,\ldots,d$. We can exploit this structure to compute the truncated HOSVD of $\mathcal{W}_l$ efficiently. We first compute the QR decomposition 
\begin{equation}\label{eq:QR}
\left[\U_l^{(i)}~\bm{W}_l^{(i)}\right]
=\bm{Q}_l^{(i)}\bm{R}_l^{(i)},
\qquad i=1,\ldots,d,
\end{equation}
which leads to a re-expression of $\mathcal{W}_l$ as
\begin{equation}\label{eq:Ltilde}
\begin{split}
\mathcal{W}_l=\mathcal{L}_l\times_{i\in[d]}(\bm{Q}_l^{(i)}\bm{R}_l^{(i)})
=\left(\mathcal{L}_l\times_{i\in[d]}\bm{R}_l^{(i)}\right)\times_{i\in[d]}\bm{Q}_l^{(i)}
:=\widetilde{\mathcal{L}}_l\times_{i\in[d]}\bm{Q}_l^{(i)}.
\end{split}
\end{equation}
Because $\bm{Q}_l^{(i)}$ are orthonormal for $i=1,\ldots,d$, it suffices to compute the HOSVD of the small tensor $\widetilde{\mathcal{L}}_l\in\R^{2r_1\times\ldots\times 2r_d}$, instead of the large tensor $\mathcal{W}_l$ directly. To this end, we compute the SVD of matricizations of $\widetilde{\mathcal{L}}_l\in\R^{2r_1\times\ldots\times 2r_d}$ to obtain 
\begin{equation}\label{eq:tildeU}
\widetilde{\U}_{l}^{(i)}:=\mbox{Left singular vectors of }(\widetilde{\mathcal{L}}_{l})_{(i)},\qquad i=1,\ldots,d.
\end{equation}
For $i=1,\ldots,d$, since $\bm{Q}_l^{(i)}$ are orthonormal, $\bm{Q}_l^{(i)}\widetilde{\U}_{l}^{(i)}\in\R^{n_i\times 2r_i}$ are the left singular vectors of $\mathcal{W}_{l}$. So, we obtain the HOSVD of $\mathcal{W}_l$. To obtain the truncated HOSVD of $\mathcal{W}_l$, we let 
\begin{equation}\label{eq:Ul+1}
\U_{l+1}^{(i)}:=\bm{Q}_l^{(i)}\left[\widetilde{\U}_{l}^{(i)}\right]_{:,1:r_i}\in\R^{n_i\times r_i},\qquad i=1,\ldots,d,
\end{equation}
and
\begin{equation}\label{eq:Cl+1}
\mathcal{C}_{l+1}:=\mathcal{W}_l\times_{i\in[d]}(\U_{l+1}^{(i)})^T=\widetilde{\mathcal{L}}_l\times_{i\in[d]}\left[\widetilde{\U}_{l}^{(i)}\right]_{:,1:r_1}^T
\end{equation}
So, $\T_{l+1}$ is expressed implicitly in HOSVD form as in the following
$$
\T_{l+1}=\mathscr{H}_{\bm{r}}(\mathcal{W}_l)
=\mathcal{C}_{l+1}\times_{i\in[d]}\U_{l+1}^{(i)}.
$$ 
We see that the main computation is $d$ QR decompositions of sizes $n_i\times 2r_i$ in \eqref{eq:QR}, $d$ tensor multiplications  to form $\widetilde{\mathcal{L}}_l$ in \eqref{eq:Ltilde}, $d$ matrix SVD's of sizes $2r_i\times\prod_{j\neq i}2r_j$ in \eqref{eq:tildeU}, $d$ matrix-matrix products of sizes $n_i\times 2r_i$ and $2r_i\times r_i$ for $\U_{l+1}^{(i)}$ in \eqref{eq:Ul+1}, and $d$ tensor multiplications to form $\mathcal{C}_{l+1}$ in \eqref{eq:Cl+1}, for $i=1,\ldots,d$ respectively. Ignoring the lower-order terms, the total computational complexity is $O(dn^dr+(2r)^{d+1})$.

\bigskip

Therefore, with the above implementation, the total computational complexity of Algorithm \ref{alg:Rgrad} is $O(d^2n^dr+(2r)^{d+1})$ plus that for two applications of $\A$ and one application of $\A^*$ in the computation of the gradient and the step size. For general $\A$, it costs $O(mn^d)$ to compute the application of $\A$ and $\A^*$ respectively. So, the total computational complexity of Algorithm \ref{alg:Rgrad} is $O(d^2n^dr+mn^d+(2r)^{d+1})$. For some special $\A$, the computational cost can be less. For example, in the tensor completion case where $\A$ samples entries, the application of $\A$ and $\A^*$ costs only $O(m)$ and the gradient $\mathcal{G}_l$ is a sparse tensor. By this way, the computational cost can be significantly reduced.

\section{Recovery Guanrantee}\label{sec:theory}
In this section, we give the recovery guarantee of RGrad algorithm. In particular, we prove that Algorithm \ref{alg:Rgrad} converges linearly to the underlying tensor $\T$, provided that the sampling operator $\A$ satisfies the so-called tensor restricted isometry property (TRIP) \cite{rauhut2017low}. Our results reveals that, if the measurement tensors $\mathcal{A}_i$, $i=1,\ldots,m$ in $\A$ are subgaussian random with i.i.d. entries, then Algorithm \ref{alg:Rgrad} is able to recover $\T$ exactly with $m~O(nr^2+r^{d+1})$. Thus, we obtain a provable tensor recovery algorithm with a sampling complexity optimal in $n$.

\subsection{Tensor Restricted Isometry Property (TRIP)}
Our main result is proved under the assumption that $\A$ satisfies the tensor restricted isometry property (TRIP) proposed in \cite{rauhut2017low}. TRIP is a generalization of matrix RIP for low-rank matrix recovery \cite{recht2010guaranteed}. Under the matrix RIP, many algorithms are guaranteed to have a successful low-rank matrix recovery \cite{recht2010guaranteed,wei2016guarantees}. Since there are several definitions of tensor ranks, TRIP can be defined for different tensor ranks. Here we present only TRIP for the multilinear rank. 

\begin{defn}[{\cite{rauhut2017low}}]
Let $\A~:~\R^{n_1\times\ldots\times n_d}\to\R^m$ be a linear operator. Let $\bm{s}$ be a rank tuple. We say that $\A$ satisfies the tensor restricted isometry property (TRIP) if there exists a constant $\delta_{\bm{s}}\in(0,1)$ such that 
\begin{align*}
    (1-\delta_{\bm{s}})\|\mathcal{X}\|_{F}^{2}\leq \|\A\mathcal{X}\|_2^2\leq(1+\delta_{\bm{s}})\|\mathcal{X}\|_F^2
\end{align*}
holds for all tensors $\mathcal{X}$ satisfying $\mathrm{mulrank}(\mathcal{X})\preceq\bm{s}$. The constant $\delta_{\bm{s}}$ is called the restricted isometry constant (RIC).
\end{defn}

It is shown in \cite{rauhut2017low} that there are several types of $\A$ satisfying TRIP. One typical such an operator is the subgaussian random operator $\A$, whose measurement tensors $\mathcal{A}_i$ for $i=1,\ldots,m$ are generated by i.i.d. subgaussian random entries. A subgaussian random operator $\A$ satisfies TRIP with multilinear rank $\bm{s}$ and constant $\delta_{\bm{s}}$ with probability exceeding $1-\epsilon$ provided that
\begin{equation}\label{eq:msubgau}
    m\geq C \delta_{\bm{s}}^{-2} \max\{ (s^d+dns)\log(d),\log(\epsilon^{-1})\}
\end{equation}
where $C$ is the constant that depends on the subgaussian parameter, and $s=\|\bm{s}\|_{\infty}$. Another typical operator satisfying TRIP is random Fourier mapping, for which we omit the details.

\subsection{Main Results and Sampling Complexity}
The main theoretical result of this paper is that Algorithm \ref{alg:Rgrad} converges linearly to the underlying true tensor $\T$ provided $\A$ satisfies TRIP. The result is summarized into the following theorem.

\begin{theorem}[Recovery Guarantee of Riemannian Gradient Descent]\label{Rguarantee}
Let $\A: \mathbb{R}^{n_1\times n_2\times\cdots\times n_d}\to\mathbb{R}^m$ with $n=\max\{n_1,\ldots,n_d\}$ be a linear map. Let $\T\in\R^{n_1\times\ldots\times n_d}$ be a tensor of multilinear rank $\bm{r}$ with $r=\|\bm{r}\|_{\infty}$. Let $\bm{y}=\A\T$. Assume $\A$ satisfies TRIP with constants $\delta_{2\bm{r}}$. Define
\begin{equation}\label{eq:gamma}
\gamma=\frac{2\delta_{2\bm{r}}}{1-\delta_{2\bm{r}}}
(\sqrt{d}+1)\left(1+(2^d-1)(\sqrt{d}+1)\frac{\norm{\T}_F}{\min\limits_{i}(\sigma_{r_i}(\T_{(i)}))}\right).
\end{equation}
Then provided $\gamma<1$, the sequence $\{\T_l\}_{l\in\mathbb{N}}$ generated by Algorithm \ref{alg:Rgrad} satisfies 
$$
\norm{\T_{l}-\T}_F \leq \gamma^l\norm{\T_0-\T}_F.
$$
In particular, $\gamma<1$ can be satisfied if 
\begin{equation}\label{eq:ric}
\delta_{2\rr}\leq \delta:=\frac{1}{3\cdot2^d(\sqrt{d}+1)^2\kappa\sqrt{r}},
\end{equation}
where $\kappa$ is a condition number  of $\T$ defined by $\kappa=\frac{\min_i\sigma_{1}(\T_{(i)})}{\min_i\sigma_{r_i}(\T_{(i)})}$.
\end{theorem}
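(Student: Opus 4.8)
The plan is to prove the one-step contraction $\norm{\T_{l+1}-\T}_F\le\gamma\norm{\T_l-\T}_F$ for every $l$ by induction, the induction hypothesis also recording that $\norm{\T_l-\T}_F\le\norm{\T_0-\T}_F$; iterating the contraction then gives the claimed bound. Write $\mathcal{W}_l=\T_l-\alpha_l\P_{\S_l}\mathcal{G}_l$ with $\mathcal{G}_l=\A^*\A(\T_l-\T)$ (using $\bm y=\A\T$), so that $\T_{l+1}=\mathscr{H}_{\bm r}(\mathcal{W}_l)$. The first step is deterministic: since $\T$ has multilinear rank $\bm r$, the quasi-optimality \eqref{eq:quasibest} gives $\norm{\mathcal{W}_l-\mathscr{H}_{\bm r}(\mathcal{W}_l)}_F\le\sqrt d\,\norm{\mathcal{W}_l-\P_{\mathbb{M}_{\bm r}}(\mathcal{W}_l)}_F\le\sqrt d\,\norm{\mathcal{W}_l-\T}_F$, hence $\norm{\T_{l+1}-\T}_F\le(\sqrt d+1)\norm{\mathcal{W}_l-\T}_F$ by the triangle inequality. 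It therefore suffices to bound $\norm{\mathcal{W}_l-\T}_F$ by $\frac{\gamma}{\sqrt d+1}\norm{\T_l-\T}_F$.

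Because $\T_l\in\S_l$, one has $\mathcal{W}_l-\T=(\I-\alpha_l\P_{\S_l}\A^*\A)(\T_l-\T)$; splitting along $\I=\P_{\S_l}+\P_{\S_l^\perp}$ and using $\P_{\S_l^\perp}\T_l=0$ rewrites this as $(\I-\alpha_l\P_{\S_l}\A^*\A)\P_{\S_l}(\T_l-\T)+\alpha_l\P_{\S_l}\A^*\A\P_{\S_l^\perp}\T-\P_{\S_l^\perp}\T$, with the first two terms in $\S_l$ and the last in $\S_l^\perp$. For the first term I would use two facts: the exact line-search step obeys $\frac{1}{1+\delta_{2\bm r}}\le\alpha_l\le\frac{1}{1-\delta_{2\bm r}}$, since $\P_{\S_l}\mathcal{G}_l\in\S_l$ has multilinear rank $\preceq2\bm r$ and TRIP controls $\norm{\A\P_{\S_l}\mathcal{G}_l}_2$; and, for any $\mathcal X,\mathcal Y\in\S_l$ — whose sum and difference stay of rank $\preceq2\bm r$ — the polarized TRIP inequality $\abs{\langle\A\mathcal X,\A\mathcal Y\rangle-\langle\mathcal X,\mathcal Y\rangle}\le\delta_{2\bm r}\norm{\mathcal X}_F\norm{\mathcal Y}_F$ holds with constant $\delta_{2\bm r}$. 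Together these bound the norm of $\I-\alpha_l\P_{\S_l}\A^*\A$ acting on $\S_l$ by $\frac{2\delta_{2\bm r}}{1-\delta_{2\bm r}}$, so the first term is at most $\frac{2\delta_{2\bm r}}{1-\delta_{2\bm r}}\norm{\T_l-\T}_F$; a further TRIP estimate bounds the second term by a constant multiple of $\norm{\P_{\S_l^\perp}\T}_F$.

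Everything then reduces to controlling the normal component $\norm{\P_{\S_l^\perp}\T}_F=\norm{\P_{\S_l^\perp}(\T-\T_l)}_F$, and this is the hard part. Writing $\T_l=\C_l\times_{i\in[d]}\U_l^{(i)}$ and letting $\V^{(i)},\U_l^{(i)}$ be orthonormal bases of the mode-$i$ column spaces of $\T$ and $\T_l$, one expands $\I-\P_{\S_l}$ through the Tucker structure; since $\P_{\S_l}$ absorbs the full first-order deviation of $\T$ from $\T_l$, the difference $\P_{\S_l^\perp}\T$ is a sum of $O(2^d)$ terms, each carrying at least two residual projectors $\I-\U_l^{(i)}(\U_l^{(i)})^T$ in distinct modes — this $2^d-1$ blow-up is the tensor-specific feature with no matrix analogue. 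Estimating, in each such term, one residual factor by the trivial bound $\norm{(\I-\U_l^{(i)}(\U_l^{(i)})^T)(\T-\T_l)_{(i)}}_F\le\norm{\T-\T_l}_F$ (valid since $(\T_l)_{(i)}$ has column space in $\U_l^{(i)}$) and each further one by a Davis--Kahan/Wedin estimate $\norm{(\I-\U_l^{(i)}(\U_l^{(i)})^T)\V^{(i)}}\lesssim\norm{\T_l-\T}_F/\sigma_{r_i}(\T_{(i)})$ yields the quadratic curvature bound $\norm{\P_{\S_l^\perp}\T}_F\lesssim(2^d-1)\norm{\T_l-\T}_F^2/\min_i\sigma_{r_i}(\T_{(i)})$, valid as long as $\norm{\T_l-\T}_F$ lies below the relevant singular-value gap — which the induction guarantees. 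To convert this into a linear contraction I would analyze the initialization: $\T_0=\mathscr{H}_{\bm r}(\A^*\bm y)=\mathscr{H}_{\bm r}(\A^*\A\T)$ satisfies $\norm{\T_0-\T}_F\lesssim(\sqrt d+1)\,\delta_{2\bm r}\norm{\T}_F$ (again by \eqref{eq:quasibest} together with TRIP on rank-$2\bm r$ tensors), so under the induction hypothesis $\norm{\T_l-\T}_F\le\norm{\T_0-\T}_F$ one factor $\norm{\T_l-\T}_F$ in the curvature bound becomes $(\sqrt d+1)\delta_{2\bm r}\norm{\T}_F$, and the normal component is at most a constant times $\frac{\delta_{2\bm r}}{1-\delta_{2\bm r}}(2^d-1)(\sqrt d+1)\frac{\norm{\T}_F}{\min_i\sigma_{r_i}(\T_{(i)})}\norm{\T_l-\T}_F$. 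Summing the tangent and normal contributions and multiplying by $\sqrt d+1$ produces exactly $\norm{\T_{l+1}-\T}_F\le\gamma\norm{\T_l-\T}_F$ with $\gamma$ as in \eqref{eq:gamma}; since $\gamma<1$ this preserves $\norm{\T_{l+1}-\T}_F\le\norm{\T_0-\T}_F$ and closes the induction.

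For the explicit RIC threshold \eqref{eq:ric}, note that for every $i$ we have $\norm{\T}_F=\norm{\T_{(i)}}_F\le\sqrt{r_i}\,\sigma_1(\T_{(i)})\le\sqrt r\,\min_j\sigma_1(\T_{(j)})$, hence $\norm{\T}_F/\min_i\sigma_{r_i}(\T_{(i)})\le\sqrt r\,\kappa$. Substituting into \eqref{eq:gamma} and using that the assumed bound on $\delta_{2\bm r}$ keeps $\frac{1}{1-\delta_{2\bm r}}$ within an absolute constant of $1$, the condition $\gamma<1$ follows from the numerical inequality $2\sqrt d+3\le(2^d+2)(\sqrt d+1)^2\sqrt r\,\kappa$ (true for all $d\ge1$, since $r,\kappa\ge1$); this final step is a routine check. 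The main obstacle throughout is the curvature estimate of the third paragraph: correctly enumerating the $O(2^d)$ purely second-order terms of $\P_{\S_l^\perp}\T$ and then coupling that bound with the initialization estimate so that all constants collapse to precisely the $\gamma$ of \eqref{eq:gamma}.
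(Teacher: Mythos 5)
Your route is the same as the paper's (quasi-optimality of $\mathscr{H}_{\bm r}$, the tangential/normal splitting of $(\I-\alpha_l\P_{\S_l}\A^*\A)(\T_l-\T)$, TRIP plus the step-size bracket for the tangential part, a second-order curvature bound for $(\I-\P_{\S_l})\T$, a one-step-IHT initialization estimate, and the induction coupling the two), and your tangential estimate, final numerics for \eqref{eq:ric}, and the shape of the curvature bound are all correct. However, two steps you dispose of in one line each are exactly where the paper needs a nontrivial argument, and as stated they would not go through. First, bounding $\norm{\P_{\S_l}\A^*\A(\I-\P_{\S_l})\T}_F$ by a multiple of $\norm{(\I-\P_{\S_l})\T}_F$ via TRIP requires knowing that $(\I-\P_{\S_l})\T$ itself has multilinear rank at most $2\rr$; writing it as $(\T-\T_l)-\P_{\S_l}(\T-\T_l)$ only gives rank $\preceq 4\rr$, and TRIP at level $4\rr$ is not assumed (and would in any case change the constant, so you would not land on the $\gamma$ of \eqref{eq:gamma}). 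The paper's Lemma \ref{lem:sensing dense2} establishes the rank-$2\rr$ fact by computing $\P_{\S_l}\T$ explicitly and showing every mode-$k$ column space involved lies in $\mathrm{span}([\U^{(k)},\U_l^{(k)}])$; your ``further TRIP estimate'' presupposes this without justification. Second, the initialization bound $\norm{\T_0-\T}_F\le(\sqrt d+1)\delta_{2\rr}\norm{\T}_F$ is not ``again by \eqref{eq:quasibest} together with TRIP'': the triangle inequality gives $(\sqrt d+1)\norm{\T-\A^*\A\T}_F$, and $(\I-\A^*\A)\T$ is not low rank, so TRIP says nothing about it. The paper inserts the auxiliary projector $\P_{\bm Q}=\prod_{i=1}^d\P_{\bm{Q}^{(i)}}^{(i)}$ onto the joint mode-$i$ spans of $\T$ and $\T_0$, uses that $\mathscr{H}_{\bm r}(\A^*\bm y)=\mathscr{H}_{\bm r}(\P_{\bm Q}\A^*\bm y)$ because $\bm Q^{(i)}$ contains the leading $r_i$ left singular vectors of $(\A^*\bm y)_{(i)}$, and only then applies \eqref{eq:quasibest} and TRIP to $\P_{\bm Q}-\P_{\bm Q}\A^*\A\P_{\bm Q}$; some such device is indispensable.

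A smaller imprecision: it is not true that every surviving term of $\I-\P_{\S_l}$ ``carries at least two residual projectors in distinct modes''. Because the second component of the tangent projector is $\sum_i\Pujnil\Puilp$, with $\Pujnil$ tied to the row space of $(\C_l)_{(i)}$, rather than $\sum_i\Puilp\prod_{j\neq i}\P_{\U_l^{(j)}}^{(j)}$, the single-perp terms do not cancel; they survive in the form $\Puilp\big(\prod_{j\neq i}\P_{\U_l^{(j)}}^{(j)}-\Pujnil\big)$ and are second order only because the bracket annihilates $\T_l$ and has operator norm one. The paper's Lemma \ref{lem:proj err} handles precisely these terms, and that is where the $2^d-1$ comes from. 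Your Wedin-type estimate for $\norm{\Pui-\Puil}$ is the content of Lemma \ref{lem:step proj err}, which the paper proves directly and unconditionally, so the ``below the singular-value gap'' caveat in your induction is unnecessary. Once these points are repaired, your argument coincides with the paper's.
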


The proof of Theorem \ref{Rguarantee} is postponed to the next section. Combining Theorem \ref{Rguarantee} and the result in \cite{rauhut2017low}, we obtain the following corollary on the sampling complexity of Algorithm \ref{alg:Rgrad}.

\begin{corollary}[Sampling Complexity of Riemannian Gradient Descent]\label{cor:SamComplexity}
Let $\A: \mathbb{R}^{n_1\times n_2\times\cdots\times n_d}\to\mathbb{R}^m$ be a linear map generated by \eqref{eq:meaA} with entries of $\mathcal{A}_i$ for $i=1,\ldots,m$ drawn from i.i.d. mean-$0$ variance-$\frac{1}{m}$ subgaussian distributions. Let $\T$, $\bm{y}$, $\kappa$, $n$, $\bm{r}$, and $r$ are the same as in Theorem \ref{Rguarantee}. Then, with probability at least $1-e^{-dnr}$, the sequence $\{\T_l\}_{l\in\mathbb{N}}$ generated by Algorithm \ref{alg:Rgrad} satisfies 
$$
\norm{\T_{l}-\T}_F \leq \gamma^l\norm{\T_0-\T}_F
$$
for some constant $\gamma\in(0,1)$, provided
\begin{equation}\label{eq:mbound}
m\geq CC_d\kappa^2(dnr^2+r^{d+1}),
\end{equation}
where $C$ is a constant depending only on the subgaussian parameter, and $C_d=d^24^{d}\log d$ is a constant depending only on $d$.
\end{corollary}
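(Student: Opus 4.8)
The plan is to derive Corollary \ref{cor:SamComplexity} by feeding the subgaussian TRIP estimate \eqref{eq:msubgau} into Theorem \ref{Rguarantee}. By Theorem \ref{Rguarantee}, and in particular the sufficient condition \eqref{eq:ric}, Algorithm \ref{alg:Rgrad} converges linearly to $\T$ as soon as $\A$ satisfies TRIP at multilinear rank $2\bm{r}$ with restricted isometry constant
$$
\delta_{2\bm{r}}\le \delta = \frac{1}{3\cdot 2^d(\sqrt d+1)^2\kappa\sqrt r}.
$$
So it suffices to exhibit $m$, of the size claimed in \eqref{eq:mbound}, for which the stated subgaussian $\A$ satisfies this TRIP bound with probability at least $1-e^{-dnr}$.

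First I would invoke \eqref{eq:msubgau} with the rank tuple $\bm{s}=2\bm{r}$, so that $s=\|\bm{s}\|_\infty=2r$, with target RIC equal to $\delta$, and with failure probability $\epsilon=e^{-dnr}$, so that $\log(\epsilon^{-1})=dnr$. This yields that TRIP at rank $2\bm{r}$ with constant $\delta$ holds with probability at least $1-e^{-dnr}$ provided
$$
m\ge C\,\delta^{-2}\max\bigl\{\bigl((2r)^d+2dnr\bigr)\log d,\ dnr\bigr\}.
$$

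Second I would simplify the right-hand side. Since $d\ge 2$ we have $dnr\le 2dnr\log d\le \bigl((2r)^d+2dnr\bigr)\log d$, so the maximum equals its first argument up to an absolute constant. Substituting $\delta^{-2}=9\cdot 4^d(\sqrt d+1)^4\kappa^2 r$ and then $r\cdot(2r)^d=2^d r^{d+1}$ and $r\cdot 2dnr=2dnr^2$ gives a sufficient condition of the form
$$
m\ge C'\,4^d(\sqrt d+1)^4\log d\cdot\kappa^2\bigl(2^d r^{d+1}+2dnr^2\bigr).
$$
Using $(\sqrt d+1)^4\le 16d^2$ and absorbing the remaining purely $d$-dependent factors (the $2^d$ in front of $r^{d+1}$ and the $2d$ in front of $nr^2$) into a single constant $C_d$ that is polynomial times exponential in $d$, this is implied by $m\ge CC_d\kappa^2(dnr^2+r^{d+1})$; a careful accounting of the constants yields $C_d$ of the stated order $d^24^d\log d$.

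I do not expect a genuine obstacle here: the corollary is essentially a bookkeeping consequence of Theorem \ref{Rguarantee} and the cited TRIP bound. The only points that need attention are (i) applying TRIP at rank $2\bm{r}$ rather than $\bm{r}$ --- recall from Section \ref{sec:preliminaries} that tensors lying on tangent spaces of $\mathbb{M}_{\bm r}$ have multilinear rank at most $2\bm{r}$, which is exactly the rank used in the convergence analysis; (ii) choosing the failure probability $\epsilon=e^{-dnr}$ so that the $\log(\epsilon^{-1})$ term does not dominate the sampling bound; and (iii) the constant tracking in the final step in order to land on the advertised $C_d=d^24^d\log d$.
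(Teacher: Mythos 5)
Your proposal is correct and follows essentially the same route as the paper, which likewise obtains the corollary by instantiating \eqref{eq:msubgau} with $\bm{s}=2\bm{r}$, $\epsilon=e^{-dnr}$, and $\delta_{\bm{s}}=\delta$ from \eqref{eq:ric}, then simplifying to \eqref{eq:mbound}; you merely spell out the constant bookkeeping that the paper leaves implicit.
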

\begin{proof}
We choose parameters in \eqref{eq:msubgau} as $\bm{s}=2\bm{r}$, $\epsilon=e^{-dnr}$, and $\delta_{\bm{s}}=\delta$ as defined in \eqref{eq:ric}. A simplification leads to the bound \eqref{eq:mbound} of $m$.
\end{proof}

Therefore, if $d$ is a constant, then we need only $O(dnr^2+r^{d+1})$ subgaussian samples to use Algorithm \ref{alg:Rgrad} to recovery an order-$d$ tensor of size $n_1\times\ldots\times n_d$ and multilinear rank $\bm{r}$. As a comparison, the degree of freedoms in the tensor is $O(dnr+r^d)$. So the sampling complexity of Algorithm \ref{alg:Rgrad} is optimal in $n$. In particular, for a $3$-rd tensor, the sampling complexity of our method is $O(nr^2+r^4)$ that is linear in $n$, while other existing provable approaches \cite{mu2014square,huang2014provable,xia2017polynomial,yuan2016tensor} are either $O(n^{1.5})$ or $O(n^2)$ in $n$.

\section{Proofs}\label{sec:proofs}
In this section, we prove our main result Theorem \ref{Rguarantee}. We first give some key lemmas in Section \ref{sec:lemmatangent}, and then the proof of Theorem \ref{Rguarantee} is presented in Section \ref{sec:proofmain}.

\subsection{Key Lemmas}\label{sec:lemmatangent}
In this section, we give lemmas that are helpful in the proof of the main theorem. All lemmas are related to the projector $\P_{\S_l}$.  

We first decompose the projector $\P_{\S_l}$ into the sum of products of projectors. As presented in Section \ref{sec:algrgrad}, $\P_{\S_l}\mathcal{G}_l$ is calculated via formulas \eqref{eq:projtan}\eqref{eq:dBinprojtan}\eqref{eq:dVinprojtan}. Let us now re-express components in \eqref{eq:projtan} in terms of products of projections. 
\begin{itemize}
\item By \eqref{eq:projtan}\eqref{eq:dBinprojtan}, the first component in $\P_{\S_l}\mathcal{G}_l$ is
$$
\mathcal{D}_l\times_{i\in[d]}\U_l^{(i)}
=\left(\mathcal{G}_l\times_{i\in[d]}(\U_l^{(i)})^T\right)\times_{i\in[d]}\U_l^{(i)}
=\mathcal{G}_l\times_{i\in[d]}(\U_l^{(i)}(\U_l^{(i)})^T).
$$
We define a projection $\P_{\U_l^{(i)}}^{(i)}~:~\R^{n_1\times\ldots\times n_d}\to\R^{n_1\times\ldots\times n_d}$ by 
$$
\P_{\U_l^{(i)}}^{(i)}\mathcal{Y}=\mathcal{Y}\times_i(\U_l^{(i)}(\U_l^{(i)})^T),\quad
\forall~\mathcal{Y}\in\R^{n_1\times\ldots\times n_d}.
$$
In other words, $\P_{\U_l^{(i)}}^{(i)}$ projects mode-$i$ fibres into the span of $\U_l^{(i)}$. So,
$$
\mathcal{D}_l\times_{i\in[d]}\U_l^{(i)}=\prod_{i=1}^{d}\P_{\U_l^{(i)}}^{(i)}\mathcal{G}_l.
$$
\item By \eqref{eq:projtan}\eqref{eq:dVinprojtan} and \eqref{eq:modeproduct}, the component $\mathcal{C}_l\times_{j\in[d]\setminus i}\U_l^{(j)}\times_{i}\bm{W}_l^{(i)}$ in $\P_{\S_l}\mathcal{G}_l$ satisfies
\begin{equation*}
\left(\mathcal{C}_l\times_{j\in[d]\setminus i}\U_l^{(j)}\times_{i}\bm{W}_l^{(i)}\right)_{(i)}
=\left(\bm{I}-\U_l^{(i)}(\U_l^{(i)})^T\right)(\mathcal{G}_l)_{(i)}\left(\otimes_{j\neq i}\U_l^{(j)}\right)(\mathcal{C}_l)_{(i)}^{\dag}(\mathcal{C}_l)_{(i)}\left(\otimes_{j\neq i}\U_l^{(j)}\right)^T,
\end{equation*}
where we used the following notation
$$
\otimes_{j\neq i}\U_l^{(j)}=\U_l^{(d)}\otimes\ldots\otimes\U_l^{(i+1)}\otimes\U_l^{(i-1)}\otimes\ldots\otimes\U_l^{(1)}.
$$
Define $\Puilp~:~\R^{n_1\times\ldots\times n_d}\to\R^{n_1\times\ldots\times n_d}$ by 
$$
\Puilp\mathcal{Y}=\mathcal{Y}\times_i(\bm{I}-\U_l^{(i)}(\U_l^{(i)})^T),\quad
\forall~\mathcal{Y}\in\R^{n_1\times\ldots\times n_d}.
$$
Thus, $\Puilp$ projects mode-$i$ fibres into the orthogonal complementary of the span of $\U_l^{(i)}$, and 
$$
\big(\Puilp\mathcal{Y}\big)_{(i)}=(\bm{I}-\U_l^{(i)}(\U_l^{(i)})^T)\mathcal{Y}_{(i)}.
$$
Define $\Pujnil~:~\R^{n_1\times\ldots\times n_d}\to\R^{n_1\times\ldots\times n_d}$ by
$$
\big(\Pujnil\mathcal{Y}\big)_{(i)}=\mathcal{Y}_{(i)}\left(\otimes_{j\neq i}\U_l^{(j)}\right)(\mathcal{C}_l)_{(i)}^{\dag}(\mathcal{C}_l)_{(i)}\left(\otimes_{j\neq i}\U_l^{(j)}\right)^T,\quad
\forall~\mathcal{Y}\in\R^{n_1\times\ldots\times n_d}.
$$
Again $\Pujnil$ is a projection. Obviously, 
$$
\mathcal{C}_l\times_{j\in[d]\setminus i}\U_l^{(j)}\times_{i}\bm{W}_l^{(i)}=\Pujnil\Puilp\mathcal{G}_l.
$$
\end{itemize}
Altogether, we can decompose $\P_{\S_l}$ into the sum of products of projectors as follows
\begin{equation}\label{eq:tanprojdec}
\P_{\S_l}=\prod_{i=1}^{d}\Puil+\sum_{i=1}^d\Pujnil\Puilp.
\end{equation}
Furthermore, the projectors in the same product are always commutable. 

Our first lemma estimate the operator norm $\big\|\Puil-\Pui\big\|$.
\begin{lemma}\label{lem:step proj err}  
We have
$$\big\|\Puil-\Pui\big\|\le\frac{1}{\sigma_{r_i}(\T_{(i)})}\|\T-\T_l\|_F,\quad\forall~ i=1,\ldots,d.$$
\end{lemma}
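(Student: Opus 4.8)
The plan is to transfer the claim to a matrix perturbation bound via matricization, and then to close it with a $\sin\Theta$-type estimate applied to the column spaces of $\T_{(i)}$ and $(\T_l)_{(i)}$.

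First I would reduce to matrices. Both $\Puil$ and $\Pui$ act only on mode-$i$ fibres, i.e. $\big(\Puil\mathcal{Y}\big)_{(i)}=\U_l^{(i)}(\U_l^{(i)})^T\mathcal{Y}_{(i)}$ and $\big(\Pui\mathcal{Y}\big)_{(i)}=\U^{(i)}(\U^{(i)})^T\mathcal{Y}_{(i)}$, where $\U^{(i)}\in\R^{n_i\times r_i}$ is the mode-$i$ factor of $\T$. Since mode-$i$ matricization is a Frobenius isometry and a bijection onto matrices of the appropriate shape, and the operator norm of left multiplication by a matrix $\bm N$ on Frobenius-normed matrices is $\norm{\bm N}_2$, one gets
\[
\norm{\Puil-\Pui}=\norm{\bm P_B-\bm P_A}_2,
\qquad
\bm P_A:=\U^{(i)}(\U^{(i)})^T,\quad \bm P_B:=\U_l^{(i)}(\U_l^{(i)})^T,
\]
two orthogonal projectors of rank exactly $r_i$ (rank $r_i$ for $\bm P_A$ since $\mathrm{mulrank}(\T)=\bm r$, and for $\bm P_B$ since $\T_l\in\mathbb{M}_{\bm r}$ is produced by $\mathscr{H}_{\bm r}$, whose factors $\U_l^{(i)}$ have $r_i$ orthonormal columns). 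Next I would use the elementary identity $\norm{\bm P_B-\bm P_A}_2=\max\{\norm{(\bm I-\bm P_B)\bm P_A}_2,\,\norm{\bm P_B(\bm I-\bm P_A)}_2\}$, valid for any two orthogonal projectors (expand $(\bm P_A-\bm P_B)^2$ and note its two PSD summands have mutually orthogonal ranges), together with the standard fact that for projectors of equal rank these two quantities coincide. Hence it suffices to bound $\norm{(\bm I-\bm P_B)\bm P_A}_2=\norm{(\bm I-\bm P_B)\U^{(i)}}_2$ (the right factor $(\U^{(i)})^T$ has orthonormal rows and does not change the spectral norm).

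The heart of the proof is the following: set $\bm E:=(\T_l-\T)_{(i)}$, so $(\T_l)_{(i)}=\T_{(i)}+\bm E$. Because the columns of $(\T_l)_{(i)}$ lie in the range of $\bm P_B$, we have $(\bm I-\bm P_B)(\T_l)_{(i)}=\bm 0$, hence $(\bm I-\bm P_B)\T_{(i)}=-(\bm I-\bm P_B)\bm E$. Writing the compact SVD $\T_{(i)}=\U^{(i)}\bm\Sigma^{(i)}(\V^{(i)})^T$ with $\bm\Sigma^{(i)}\in\R^{r_i\times r_i}$ diagonal and invertible, so that $\U^{(i)}=\T_{(i)}\V^{(i)}(\bm\Sigma^{(i)})^{-1}$, we obtain
\[
(\bm I-\bm P_B)\U^{(i)}=(\bm I-\bm P_B)\T_{(i)}\V^{(i)}(\bm\Sigma^{(i)})^{-1}=-(\bm I-\bm P_B)\bm E\,\V^{(i)}(\bm\Sigma^{(i)})^{-1},
\]
and since $\bm I-\bm P_B$ and $\V^{(i)}$ are spectral-norm contractions while $\norm{(\bm\Sigma^{(i)})^{-1}}_2=1/\sigma_{r_i}(\T_{(i)})$, we get $\norm{(\bm I-\bm P_B)\U^{(i)}}_2\le\norm{\bm E}_2/\sigma_{r_i}(\T_{(i)})\le\norm{\bm E}_F/\sigma_{r_i}(\T_{(i)})=\norm{\T-\T_l}_F/\sigma_{r_i}(\T_{(i)})$, using again that matricization preserves the Frobenius norm. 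Chaining the three steps gives the lemma.

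The main obstacle is getting the denominator right. Estimating the ``other side'' $\norm{\bm P_B(\bm I-\bm P_A)}_2$ directly would route through the pseudoinverse of $(\T_l)_{(i)}$ and produce $\sigma_{r_i}((\T_l)_{(i)})$ instead of $\sigma_{r_i}(\T_{(i)})$; the equal-rank identity for orthogonal projectors is precisely what lets us always reduce to $\norm{(\bm I-\bm P_B)\U^{(i)}}_2$, which is controlled by the singular values of the fixed tensor $\T$. The only hypothesis one must be careful to record is therefore $\T_l\in\mathbb{M}_{\bm r}$, so that $\bm P_B$ really has rank $r_i$ and the equidimensional identity applies.
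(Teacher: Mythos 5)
Your proof is correct and follows essentially the same route as the paper's: reduce the tensor operator norm to $\|\U_l^{(i)}(\U_l^{(i)})^T-\U^{(i)}(\U^{(i)})^T\|_2$, invoke the equal-rank projector identity to pass to $\|(\bm{I}-\U_l^{(i)}(\U_l^{(i)})^T)\U^{(i)}\|_2$, and exploit that $\bm{I}-\U_l^{(i)}(\U_l^{(i)})^T$ annihilates $(\T_l)_{(i)}$ so that $\T$ can be replaced by $\T-\T_l$, producing the factor $1/\sigma_{r_i}(\T_{(i)})$ and then passing from the spectral to the Frobenius norm. The only cosmetic difference is that you write $\U^{(i)}=\T_{(i)}\V^{(i)}(\bm{\Sigma}^{(i)})^{-1}$ via the compact SVD of the matricization, whereas the paper writes $\U^{(i)}=\left(\T\times_{j\in[d]\setminus i}(\U^{(j)})^T\right)_{(i)}\mathcal{C}_{(i)}^{\dagger}$ through the Tucker core; both substitutions give the same bound.
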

\begin{proof}
Let $i\in\{1,\ldots,d\}$ be given. Then
\begin{equation*}
\begin{split}
\big\|\Puil-\Pui\big\|&=\sup_{\|\Z\|_F=1}\big\|\big(\Puil-\Pui\big)\Z\big\|_F
=\sup_{\|\Z\|_F=1}\big\|\big(\U_l^{(i)}(\U_l^{(i)})^T-\U^{(i)}(\U^{(i)})^T\big)\Z_{(i)}\big\|_F\cr
&\leq\|\U_l^{(i)}(\U_l^{(i)})^T-\U^{(i)}(\U^{(i)})^T\|_2.
\end{split}
\end{equation*}
The upper bound is attainable by choosing $\Z$ such that its mode-$i$ fibres are all multiples of a largest singular vector of $\U_l^{(i)}(\U_l^{(i)})^T-\U^{(i)}(\U^{(i)})^T$ and $\|\Z\|_F=1$. Therefore,
$$
\big\|\Puil-\Pui\big\|=\|\U_l^{(i)}(\U_l^{(i)})^T-\U^{(i)}(\U^{(i)})^T\|_2,
$$
whose right hand side is estimated as follows.

By direct calculation,
$$
\T\times_{j\in[d]\setminus i}(\U^{(j)})^T
=(\mathcal{C}\times_{j\in[d]}\U^{(j)})\times_{j\in[d]\setminus i}(\U^{(j)})^T
=(\mathcal{C}\times_{j\in[d]\setminus i}(\U^{(j)})^T\U^{(j)})\times_i(\U^{(i)})^T
=\mathcal{C}\times_i(\U^{(i)})^T.
$$
Therefore, \eqref{eq:modeimul1} implies $\left(\T\times_{j\in[d]\setminus i}(\U^{(j)})^T\right)_{(i)}=\U^{(i)}\mathcal{C}_{(i)}$. Since the multilinear rank of $\T$ is $\bm{r}$, the matrix $\mathcal{C}_{(i)}$ is of full row rank, so that $\mathcal{C}_{(i)}\mathcal{C}_{(i)}^{\dag}=\bm{I}$. Thus
$$
\U^{(i)}=\left(\T\times_{j\in[d]\setminus i}(\U^{(j)})^T\right)_{(i)}\mathcal{C}_{(i)}^{\dag}.
$$
This together with an equality from the standard textbook \cite{golub2012matrix} implies
\begin{equation}\label{eq:lem:projerr1}
\begin{split}
\|\U_l^{(i)}(\U_l^{i})^T-\U^{(i)}(\U^{(i)})^T\|_2
&=\|(\bm{I}-\U_l^{(i)}(\U_l^{(i)})^T)\U^{(i)}(\U^{(i)})^T\|_2\cr
&=\big\|(\bm{I}-\U_l^{(i)}(\U_l^{(i)})^T)\left(\T\times_{j\in[d]\setminus i}(\U^{(j)})^T\right)_{(i)}\mathcal{C}_{(i)}^{\dagger}(\U^{(i)})^T\big\|_2\cr
&=\big\|\left(\T\times_i(\bm{I}-\U_l^{(i)}(\U_l^{(i)})^T)\times_{j\in[d]\setminus i}(\U^{(j)})^T\right)_{(i)}\mathcal{C}_{(i)}^{\dagger}(\U^{(i)})^T\big\|_2
\end{split}
\end{equation}
Obviously, $\T_l\times_i(\bm{I}-\U_l^{(i)}(\U_l^{(i)})^T)=\bm{0}$. Plugging it into the last line of \eqref{eq:lem:projerr1} gives 
\begin{equation}\label{eq:lem:projerr2}
\begin{split}
\|\U_l^{(i)}&(\U_l^{(i)})^T-\U^{(i)}(\U^{(i)})^T\|_2
=\|((\T-\T_l)\times_i(\bm{I}-\U_l^{(i)}(\U_l^{(i)})^T)\times_{j\in[d]\setminus i}(\U^{(j)})^T)_{(i)}\mathcal{C}_{(i)}^{\dagger}(\U^{(i)})^T\|_2\cr
&\leq\left\|\left((\T-\T_l)\times_i(\bm{I}-\U_l^{(i)}(\U_l^{(i)})^T)\times_{j\in[d]\setminus i}(\U^{(j)})^T\right)_{(i)}\right\|_2\|\mathcal{C}_{(i)}^{\dagger}\|_2\|(\U^{(i)})^T\|_2.
\end{split}
\end{equation}
In view of \eqref{eq:modeproduct},
\begin{equation}\label{eq:lem:projerr3}
\begin{split}
&\left\|\left((\T-\T_l)\times_i(\bm{I}-\U_l^{(i)}(\U_l^{(i)})^T)\times_{j\in[d]\setminus i}(\U^{(j)})^T\right)_{(i)}\right\|_2\cr
=&\|(\bm{I}-\U_l^{(i)}(\U_l^{(i)})^T)\left(\T-\T_l\right)_{(i)}((\U^{(d)})^T\otimes\cdots\otimes (\U^{(i+1)})^T\otimes (\U^{(i-1)})^T\otimes\cdots\otimes (\U^{(1)})^T)^T\|_2\cr
\leq&\|(\bm{I}-\U_l^{(i)}(\U_l^{(i)})^T)\|_2\|\left(\T-\T_l\right)_{(i)}\|_2\cdot\prod_{\substack{j=1\\j\neq i}}^{d}\|(\U^{(j)})^T\|_2=\|\left(\T-\T_l\right)_{(i)}\|_2.
\end{split}
\end{equation}
By combining \eqref{eq:lem:projerr2} and \eqref{eq:lem:projerr3}, we obtain
$$
\|\U_l^{(i)}(\U_l^{(i)})^T-\U^{(i)}(\U^{(i)})^T\|_2
\leq\|(\T-\T_l)_{(i)}\|_2\|\mathcal{C}_{(i)}^{\dagger}\|_2
\leq \|\mathcal{C}_{(i)}^{\dagger}\|_2\|(\T-\T_l)_{(i)}\|_F
=\frac{1}{\sigma_{r_i}(\T_{(i)})}\|\T-\T_l\|_F.
$$ 
\end{proof}

Next, we are going to estimate $\norm{(\I-\P_{\S_l})\T}_F$.
\begin{lemma}\label{lem:proj err} 
We have
\begin{align*}
\norm{(\I-\P_{\S_l})\T}_F\leq \frac{2^d-1}{\min\limits_{i}(\sigma_{r_i}(\T_{(i)}))}\norm{\T-\T_l}_F^2
\end{align*}
\end{lemma}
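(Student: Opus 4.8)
The plan is to bound $\norm{(\I-\P_{\S_l})\T}_F$ by exploiting the decomposition \eqref{eq:tanprojdec} of $\P_{\S_l}$ as a sum of products of coordinate projectors, together with the fact that $\T$ itself lies in its own tangent-like subspace spanned by $\{\U^{(i)}\}$. The starting observation is that $\T=\T\times_{i\in[d]}(\U^{(i)}(\U^{(i)})^T)=\prod_{i=1}^d\Pui\T$, so I can write $(\I-\P_{\S_l})\T$ as a telescoping difference between the product of the \emph{true} projectors $\prod_i\Pui$ and the operator $\P_{\S_l}$ built from the \emph{iterate} projectors $\Puil$. Since Lemma~\ref{lem:step proj err} controls each $\norm{\Puil-\Pui}$ by $\frac{1}{\sigma_{r_i}(\T_{(i)})}\norm{\T-\T_l}_F$, the goal is to express the difference as a sum of terms each of which carries at least one factor $(\Puil-\Pui)$ acting on $\T$, then iterate.

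First I would record the pointwise identity, valid on $\T$: since $\Puilp=\I^{(i)}-\Puil$ (in mode $i$) and each $\Pui\T=\T$, one has $\Puilp\T=(\Pui-\Puil)\T\times_i(\cdots)$, more precisely $\Puilp\T = (\I-\Puil)\Pui\T$ has norm at most $\norm{\Puil-\Pui}\,\norm{\T}_F \le \frac{1}{\sigma_{r_i}(\T_{(i)})}\norm{\T-\T_l}_F\norm{\T}_F$; but actually a sharper route avoids the $\norm{\T}_F$ blow-up. The cleaner identity I would use is $\I-\prod_{i=1}^d\Puil = \sum_{i=1}^d \left(\prod_{k<i}\Puil[k]\right)\Puilp[i]$ (a standard telescoping of products of commuting-in-their-own-product projectors), and similarly $\I-\prod_{i=1}^d\Pui=\sum_{i=1}^d\left(\prod_{k<i}\Pui[k]\right)\Puip[i]$ where $\Puip=\I-\Pui$. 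Applying the latter to $\T$ gives $0$. The plan is then to compare the two sums term by term: the difference between the $i$-th summand applied to $\T$ involves swapping $\Pui[k]\to\Puil[k]$ for $k<i$ and $\Puip[i]\to\Puilp[i]$, and since $\Puip[i]\T=0$, the leading contribution in each term is $\left(\prod_{k<i}\Puil[k]\right)\Puilp[i]\T$ minus a remainder where some $\Puil[k]$ has been replaced by $\Pui[k]$ so that a $\Puip[k]\T=0$ can be extracted. Carefully telescoping this, every surviving term ends up being a product of at least one factor of the form $(\Puil[j]-\Pui[j])$ hitting a piece of $\T$ of Frobenius norm $\le\sigma_{r_i}(\T_{(i)})$-type quantities that ultimately collapse to $\norm{\T-\T_l}_F$ again — yielding an overall bound of the shape $(\text{number of terms})\cdot\frac{1}{\min_i\sigma_{r_i}(\T_{(i)})}\norm{\T-\T_l}_F^2$.

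The combinatorial bookkeeping is where the constant $2^d-1$ enters: expanding $\I-\prod_i\Puil$ fully against $\I-\prod_i\Pui$ and tracking which of the $d$ mode-projectors is the "true" one versus the "iterate" one produces on the order of $2^d-1$ nonzero cross terms (every nonempty subset $S\subseteq[d]$ of modes where a discrepancy $\Puil-\Pui$ is inserted contributes one term), each bounded by $\frac{1}{\min_i\sigma_{r_i}(\T_{(i)})}\norm{\T-\T_l}_F^2$ after using Lemma~\ref{lem:step proj err}, submultiplicativity of the operator norm, and the fact that all the projectors are norm-$1$. Summing gives exactly the claimed inequality. I would present this via an induction on $d$, or equivalently a single careful telescoping identity, so as not to drown in indices.

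The main obstacle I anticipate is getting the quadratic (rather than merely linear) dependence on $\norm{\T-\T_l}_F$: a naive triangle-inequality split loses a power because $\norm{\Puil-\Pui}$ is only first order. The trick is that $(\I-\P_{\S_l})\T$ vanishes to \emph{second} order — it is zero when $\T_l=\T$ and its first-order term in $\T-\T_l$ also cancels because $\P_{\S_l}$ is precisely the projector onto the tangent space at $\T_l$, which already contains the first-order variation of $\T$ around $\T_l$. Concretely, this forces me to pair each $\Puilp[i]\T$ with the cancellation $\Puip[i]\T=0$ so that what remains is $(\Puilp[i]-\Puip[i])\T=(\Pui[i]-\Puil[i])\T$, already $O(\norm{\T-\T_l}_F)$, and then extract a \emph{second} factor of the same order from the accompanying product of projectors by a further swap-and-cancel. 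Managing these two simultaneous extractions cleanly, while keeping the count of surviving terms at $2^d-1$ and each norm at $\frac{1}{\min_i\sigma_{r_i}(\T_{(i)})}\norm{\T-\T_l}_F^2$, is the delicate part of the argument.
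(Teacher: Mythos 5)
Your high-level explanation of \emph{why} the bound is second order (the tangent space at $\T_l$ absorbs the first-order variation of $\T$) is correct, and your first extraction, $\Puilp\T=(\Pui-\Puil)\T$ combined with Lemma~\ref{lem:step proj err}, matches the paper. But the concrete algebra you propose has a genuine gap: after the opening sentence you never use the cross terms $\sum_{i=1}^d\Pujnil\Puilp$ of the decomposition \eqref{eq:tanprojdec}, and your computation is really an estimate of $(\I-\prod_{i=1}^d\Puil)\T=(\prod_{i=1}^d\Pui-\prod_{i=1}^d\Puil)\T$. That quantity is genuinely first order in $\norm{\T-\T_l}_F$, so no amount of telescoping can make it quadratic: in your expansion the terms in which exactly one mode carries a discrepancy, e.g.\ $\bigl(\prod_{k\neq i}\mathscr{P}^{(k)}_{\U^{(k)}}\bigr)(\Pui-\Puil)\T$, carry a single small factor and do not cancel against anything inside $\prod_i\Pui-\prod_i\Puil$; the proposed ``further swap-and-cancel from the accompanying product of projectors'' leaves exactly this fully-swapped remainder behind, with no second small factor. (Already for $d=2$, $(\I-\mathscr{P}^{(1)}_{\U_l^{(1)}}\mathscr{P}^{(2)}_{\U_l^{(2)}})\T$ is $O(\norm{\T-\T_l}_F)$, not $O(\norm{\T-\T_l}_F^2)$.) Hence your claim that every nonempty subset of modes contributes a term bounded by $\frac{1}{\min_i\sigma_{r_i}(\T_{(i)})}\norm{\T-\T_l}_F^2$ fails precisely for the singleton subsets.

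The missing ingredient is how the second small factor is actually produced. In the paper one writes $\I=\prod_{i=1}^d(\Puil+\Puilp)$ and subtracts the \emph{whole} of $\P_{\S_l}$, so that each single-perp term pairs with a cross term to form $\Puilp\bigl(\prod_{j\neq i}\mathscr{P}^{(j)}_{\U_l^{(j)}}-\Pujnil\bigr)$. The quadratic gain then comes not from a second discrepancy swap but from the fact that every residual bracket annihilates $\T_l$ (both $\mathscr{P}^{(j)}_{\U_l^{(j)\perp}}\T_l=0$ and $\bigl(\prod_{j\neq i}\mathscr{P}^{(j)}_{\U_l^{(j)}}-\Pujnil\bigr)\T_l=0$): after converting $\Puilp\T=(\Pui-\Puil)\T$ and commuting the mode-$i$ factor past the bracket, the bracket acts on $\T-\T_l$ rather than on $\T$, which supplies the second factor $\norm{\T-\T_l}_F$. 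One also has to check $\bigl\|\prod_{j\neq i}\mathscr{P}^{(j)}_{\U_l^{(j)}}-\Pujnil\bigr\|\le 1$ (it is itself a projector, despite the core pseudo-inverse hidden in $\Pujnil$), a point your sketch never addresses. With these elements your $2^d-1$ counting and the use of Lemma~\ref{lem:step proj err} do yield the stated bound; as written, the plan does not.
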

\begin{proof}
By the decomposition of $\P_{\S_l}$, we have
\begin{equation*}
\begin{split}
(\I-&\P_{\S_l})\T
=\Bigg(\I-\Big(\prod_{i=1}^d\Puil + \sum_{i=1}^d\Pujnil\Puilp\Big)\Bigg)\T\cr
&=\Bigg(\prod_{i=1}^d\bigg(\Puil+\Puilp\bigg)-\bigg(\prod_{i=1}^d\Puil + \sum_{i=1}^d\Pujnil\Puilp\bigg)\Bigg)\T\cr
&=\Bigg(\sum_{i=1}^d\Big(\Puilp\prod_{j\neq i}\P_{\U_l^{(j)}}^{(i)}\Big)+\sum_{i=1}^d\sum_{j\neq u}\Big(\P_{\U_l^{{(i)}\perp}}^{(i)}\P_{\U_l^{{(j)}\perp}}^{(j)}\prod_{k\neq i,j}\P_{\U_l^{(k)}}^{(k)}\Big)+\cdots+\prod_{i=1}^d\Puilp\cr
&\qquad\qquad\qquad\qquad-\sum_{i=1}^d\Pujnil\Puilp\Bigg)\T\cr
&=\sum_{i=1}^d\Puilp\Bigg(\Big(\prod_{j\neq i}\P_{\U_l^{(j)}}^{(j)}-\Pujnil\Big)+\sum_{j\neq i}\Big(\P_{\U_l^{{(j)}\perp}}^{(j)}\prod_{k\neq i,j}\P_{\U_l^{(k)}}^{(k)}\Big)+\cdots+\prod_{j\neq i}\P_{\U_l^{(j)\perp}}^{(j)}\Bigg)\T\cr
&=\sum_{i=1}^d\Bigg(\Big(\prod_{j\neq i}\P_{\U_l^{(j)}}^{(j)}-\Pujnil\Big)+\sum_{j\neq i}\Big(\P_{\U_l^{{(j)\perp}}}^{(j)}\prod_{k\neq i,j}\P_{\U_l^{(k)}}^{(k)}\Big)+\cdots+\prod_{j\neq i}\P_{\U_l^{(j)\perp}}^{(j)}\Bigg)\Puilp\T
\end{split}
\end{equation*}
Note that, for any $i$,
$$
\Puilp\T=\T-\Puil\T=\Pui\T-\Puil\T=\Big(\Pui-\Puil\Big)\T,
$$
from which it follows
\begin{equation}\label{eq:lem:projerr10}
\begin{split}
&(\I-\P_{\S_l})\T\cr
=&\sum_{i=1}^d\Bigg(\Big(\prod_{j\neq i}\P_{\U_l^{(j)}}^{(j)}-\Pujnil\Big)+\sum_{j\neq i}\Big(\P_{\U_l^{{(j)}\perp}}^{(j)}\prod_{k\neq i,j}\P_{\U_l^{(k)}}^{(k)}\Big)+\cdots+\prod_{j\neq i}\P_{\U_l^{(j)}\perp}^{(j)}\Bigg)
\Big(\Pui-\Puil\Big)\T\cr
=&\sum_{i=1}^d\Big(\Pui-\Puil\Big)\Bigg(\Big(\prod_{j\neq i}\P_{\U_l^{(j)}}^{(j)}-\Pujnil\Big)+\sum_{j\neq i}\Big(\prod_{k\neq i,j}\P_{\U_l^{(k)}}^{(k)}\Big)\P_{\U_l^{{(j)}\perp}}^{(j)}+\cdots+\prod_{j\neq i}\P_{\U_l^{(j)\perp}}^{(j)}\Bigg)\T.
\end{split}
\end{equation}
Because $\P_{\U_l^{(j)\perp}}^{(j)}\T_l=0$ for any $j$, we have, for any $i$,
\begin{equation}\label{eq:lem:projerr20}
\Bigg(\sum_{j\neq i}\Big(\prod_{k\neq i,j}\P_{\U_l^{(k)}}^{(k)}\Big)\P_{\U_l^{{(j)}\perp}}^{(j)}+\cdots+\prod_{j\neq i}\P_{\U_l^{(j)\perp}}^{(j)}\Bigg)\T_l=0
\end{equation}
Moreover, for any $i$, 
\begin{equation}\label{eq:lem:projerr30}
\Big(\prod_{j\neq i}\P_{\U_l^{(j)}}^{(j)}-\Pujnil\Big)\T_l
=\prod_{j\neq i}\P_{\U_l^{(j)}}^{(j)}\T_l-\Pujnil\T_l
=\T_l-\T_l=0.
\end{equation}
Combined with \eqref{eq:lem:projerr20} and \eqref{eq:lem:projerr30}, Eq. \eqref{eq:lem:projerr10} gives
\begin{equation*}
\begin{split}
(\I-\P_{\S_l})\T=\sum_{i=1}^d\Big(\Pui-&\Puil\Big)\Bigg(\Big(\prod_{j\neq i}\P_{\U_l^{(j)}}^{(j)}-\P_{\{\U_l^{(j)}\}_{j\ne i}}^{(j\ne i)}\Big)\cr
&\qquad+\sum_{j\neq i}\Big(\prod_{k\neq i,j}\P_{\U_l^{(k)}}^{(k)}\Big)\P_{\U_l^{{(j)}\perp}}^{(j)}+\cdots+\prod_{j\neq i}\P_{\U_l^{(j)\perp}}^{(j)}\Bigg)(\T-\T_l).
\end{split}
\end{equation*}
Therefore, 
\begin{equation*}
\begin{split}
\|(\I-\P_{\S_l})\T\|_F
\leq &\sum_{i=1}^d\Big\|\Pui-\Puil\Big\|\Bigg(\Big\|\prod_{j\neq i}\P_{\U_l^{(j)}}^{(j)}-\Pujnil\Big\|\cr
&\qquad\qquad\qquad+\sum_{j\neq i}\prod_{k\neq i,j}\big\|\P_{\U_l^{(k)}}^{(k)}\big\|\big\|\P_{\U_l^{{(j)}\perp}}^{(j)}\big\|+\cdots+\prod_{j\neq i}\big\|\P_{\U_l^{(j)\perp}}^{(j)}\big\|\Bigg)\|\T-\T_l\|_F\cr
\leq&\sum_{i=1}^d\frac{1}{\sigma_{r_i}(\T_{(i)})}\|\T-\T_l\|_F
\Bigg(1+\sum_{j\neq i}1+\ldots+1\Bigg)\|\T-\T_l\|_F\cr
\leq&\frac{1}{\min_i(\sigma_{r_i}(\T_{(i)}))}\|\T-\T_l\|_F^2 \cdot \sum_{i=1}^d\Bigg(1+\sum_{j\neq i}1+\ldots+1\Bigg)\cr
=&\frac{2^d-1}{\min_i(\sigma_{r_i}(\T_{(i)}))}\|\T-\T_l\|_F^2,
\end{split}
\end{equation*}
where in the second inequality we have used Lemma \ref{lem:step proj err} and the fact $\Big\|\prod_{j\neq i}\P_{\U_l^{(j)}}^{(j)}-\Pujnil\Big\|=1$ because the operator is a projector by direct calculation.
\end{proof}

We remark that we can take advantage of higher order of complementary projections to make the bound tighter. In particular, the bound in Lemma \ref{lem:proj err} can be improved to 
$$\bigg(\Big(1+\frac{1}{\min\limits_i(\sigma_{r_i}(\T_{(i)}))}\|\T-\T_l\|_F\Big)^d-\frac{d}{\min\limits_i(\sigma_{r_i}(\T_{(i)}))}\|\T-\T_l\|_F-1\bigg)\|\T-\T_l\|_F.
$$ 
However, it is not necessary in this paper.

The last lemma uses TRIP constant to estimate $\norm{\P_{\S_l}\A^*\A(\I-\P_{\S_l})\T}_F$.
\begin{lemma}\label{lem:sensing dense2}
Assume $\A$ satisfies TRIP with constant $\delta_{2\bm{r}}$.
Then, it holds that
$$
\norm{\P_{\S_l}\A^*\A(\I-\P_{\S_l})\T}_F\leq(1+\delta_{2\rr})\norm{(\I-\P_{\S_l})\T}_F.
$$
\end{lemma}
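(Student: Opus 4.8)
The plan is to combine the self-adjointness of the orthogonal projector $\P_{\S_l}$ with the Cauchy--Schwarz inequality, so that TRIP can be applied to each of the two tensors appearing in the resulting inner product. Set $\mathcal{Z}:=\P_{\S_l}\A^{*}\A(\I-\P_{\S_l})\T$; since $\mathcal{Z}\in\S_l$ and $\S_l$ is a linear subspace, $\norm{\mathcal{Z}}_F=\sup\{\,\langle\mathcal{W},\mathcal{Z}\rangle : \mathcal{W}\in\S_l,\ \norm{\mathcal{W}}_F\le 1\,\}$. For any such $\mathcal{W}$, using $\P_{\S_l}^{*}=\P_{\S_l}$, $\P_{\S_l}\mathcal{W}=\mathcal{W}$, and the definition of the adjoint,
$$
\langle\mathcal{W},\mathcal{Z}\rangle=\langle\P_{\S_l}\mathcal{W},\A^{*}\A(\I-\P_{\S_l})\T\rangle=\langle\A\mathcal{W},\A(\I-\P_{\S_l})\T\rangle\le\norm{\A\mathcal{W}}_2\,\norm{\A(\I-\P_{\S_l})\T}_2.
$$
It then suffices to bound both operator-norm terms by TRIP, for which I need both $\mathcal{W}$ and $(\I-\P_{\S_l})\T$ to have multilinear rank at most $2\bm{r}$.

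For $\mathcal{W}\in\S_l$ this is exactly the fact recalled in Section~\ref{sec:preliminaries}: every tensor in a tangent space of $\mathbb{M}_{\bm{r}}$ has multilinear rank at most $2\bm{r}$. The substantive intermediate claim is that $(\I-\P_{\S_l})\T=\T-\P_{\S_l}\T$ also has multilinear rank at most $2\bm{r}$. To see this I read off the column spaces of the mode-$i$ matricizations of $\P_{\S_l}\T$ from the explicit form \eqref{eq:projtan}--\eqref{eq:dVinprojtan}: the first summand $\mathcal{D}\times_{i\in[d]}\U_l^{(i)}$ has mode-$i$ column space inside $\mathrm{span}(\U_l^{(i)})$, while the $i$-th summand $\mathcal{C}_l\times_{j\ne i}\U_l^{(j)}\times_i\bm{W}^{(i)}$ has, by the closed form \eqref{eq:dVinprojtan}, mode-$i$ column space contained in the range of $(\bm{I}-\U_l^{(i)}(\U_l^{(i)})^{T})\T_{(i)}\big(\otimes_{j\ne i}\U_l^{(j)}\big)(\mathcal{C}_l)_{(i)}^{\dagger}$, hence inside $\mathrm{span}(\U_l^{(i)})+\mathrm{col}(\T_{(i)})$. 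Adding the mode-$i$ column space of $\T$ itself, which is $\mathrm{col}(\T_{(i)})$, the mode-$i$ matricization of $\T-\P_{\S_l}\T$ has column space inside $\mathrm{col}(\T_{(i)})+\mathrm{span}(\U_l^{(i)})$, a space of dimension at most $r_i+r_i=2r_i$. Therefore $\mathrm{mulrank}(\T-\P_{\S_l}\T)\preceq 2\bm{r}$.

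With both multilinear ranks bounded by $2\bm{r}$, TRIP with constant $\delta_{2\bm{r}}$ gives $\norm{\A\mathcal{W}}_2\le\sqrt{1+\delta_{2\bm{r}}}\,\norm{\mathcal{W}}_F\le\sqrt{1+\delta_{2\bm{r}}}$ and $\norm{\A(\I-\P_{\S_l})\T}_2\le\sqrt{1+\delta_{2\bm{r}}}\,\norm{(\I-\P_{\S_l})\T}_F$; substituting these into the displayed chain and taking the supremum over $\mathcal{W}$ yields $\norm{\mathcal{Z}}_F\le(1+\delta_{2\bm{r}})\norm{(\I-\P_{\S_l})\T}_F$, which is the claim. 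I expect the main obstacle to be precisely the multilinear-rank bound on $(\I-\P_{\S_l})\T$: a crude count through the decomposition \eqref{eq:tanprojdec} overshoots, since the complementary projectors $\Puilp$ appear to inject all of $\mathrm{span}(\U_l^{(i)})^{\perp}$ into the relevant mode-$i$ column space, and one must use the finer description of the $\bm{W}^{(i)}$-blocks afforded by \eqref{eq:dVinprojtan} to see that those extra directions already lie in $\mathrm{col}(\T_{(i)})+\mathrm{span}(\U_l^{(i)})$.
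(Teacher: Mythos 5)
Your proposal is correct and follows essentially the same route as the paper's proof: establish $\mathrm{mulrank}\big((\I-\P_{\S_l})\T\big)\preceq 2\bm{r}$ by reading off the mode-wise column spaces from the explicit form of $\P_{\S_l}\T$, then bound the norm by duality, Cauchy--Schwarz, and TRIP applied to both factors. The only cosmetic omission is that for the mode-$i$ column space you should also account for the summands $\mathcal{C}_l\times_{j\in[d]\setminus k}\U_l^{(j)}\times_k\bm{W}^{(k)}$ with $k\neq i$, whose mode-$i$ factor is $\U_l^{(i)}$ and hence contribute nothing beyond $\mathrm{span}(\U_l^{(i)})$, exactly as for the first summand.
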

\begin{proof}
We first show that the multilinear rank of $(\I-\P_{\S_l})\T$ is at most $2\bm{r}$. It suffices to show that $\mathrm{span}\big(((\I-\P_{\S_l})\T)_{(k)}\big)\subset\mathrm{span}([\U^{(k)}, \U_l^{(k)}])$ for all $k$.  To this end, similar to formulas \eqref{eq:projtan}\eqref{eq:dBinprojtan}\eqref{eq:dVinprojtan} we obtain
$$
\P_{\S_l}\T=\underbrace{(\T\times_{i\in[d]}(\U_{l}^{(i)})^T)\times_{i\in[d]}\U_{l}^{(i)}}_{\mathcal{B}_0}+\sum_{i=1}^{d}\underbrace{\C_l\times_{j\in[d]\setminus i}\U_l^{(j)}\times_{i}\bm{W}^{(i)}}_{\mathcal{B}_i},
$$
where
$$
\bm{W}^{(i)}=\left(\bm{I}-\U_l^{(i)}(\U_l^{(i)})^T\right)\T_{(i)}\left(\otimes_{j\neq i}\U_l^{(j)}\right)(\mathcal{C}_l)_{(i)}^{\dag}.
$$
Since $\T=\C\times_{i\in[d]}\U^{(i)}$, we have
$\mathcal{B}_0=\C\times_{i\in[d]}(\U_l^{(i)}(\U_l^{(i)})^T\U^{(i)})$, which implies 
$$
\mathrm{span}\big((\mathcal{B}_0)_{(k)}\big)\subset\mathrm{span}(\U_l^{(k)}(\U_l^{(k)})^T\U^{(k)}).
$$
Moreover, a simple calculation leads to $\bm{W}^{(i)}=(\bm{I}-\U_l^{(i)}(\U_l^{(i)})^T\bm{U}^{(i)}\mathcal{C}_{(i)}\left(\otimes_{j\neq i}\U^{(j)}\right)^T\left(\otimes_{j\neq i}\U_l^{(j)}\right)(\mathcal{C}_l)_{(i)}^{\dag}$, and thus
$$
\mathrm{span}\big((\mathcal{B}_i)_{(k)}\big)\subset
\begin{cases}
\mathrm{span}((\bm{I}-\U_l^{(k)}(\U_l^{(k)})^T)\U^{(k)}),&\mbox{if }k=i\cr
\mathrm{span}(\U_l^{(k)}),&\mbox{if }k\neq i.
\end{cases}
$$
Obviously, $\mathrm{span}(\T_{(k)})=\mathrm{span}(\U^{(k)})$.
Therefore, for any $k$,
\begin{equation*}
\begin{split}
\mathrm{span}\big(((\I-\P_{\S_l})\T)_{(k)}\big)
&\subset\mathrm{span}(\U^{(k)})\oplus\mathrm{span}(\U_l^{(k)}(\U_l^{(k)})^T\U^{(k)})\oplus\mathrm{span}((\bm{I}-\U_l^{(k)}(\U_l^{(k)})^T)\U^{(k)})\oplus\mathrm{span}(\U_l^{(k)})\cr
&\subset\mathrm{span}([\U^{(k)}, \U_l^{(k)}])
\end{split}
\end{equation*}

Now, we have 
\begin{align*}
\norm{\P_{\S_l}\A^*\A(\I-\P_{\S_l})\T}_F & =\sup_{\norm{\Z}_F=1}\abs{\inner{\P_{\S_l}\A^*\A(\I-\P_{\S_l})\T,\Z}}=\sup_{\norm{\Z}_F=1}\abs{\inner{\A(\I-\P_{\S_l})\T, \A\P_{\S_l}\Z}}\\
&\leq \|\A(\I-\P_{\S_l})\T\|_2 \|\A\P_{\S_l}\Z\|_2 \leq(1+\delta_{2\rr})\|(\I-\P_{\S_l})\T\|_F
\end{align*}
where the last inequality follows from TRIP assumption together with the facts that $\mathrm{mulrank}((\I-\P_{\S_l})(\T))\preceq 2\rr$ and $\mathrm{mulrank}(\P_{\S_l}(\Z))\preceq 2\rr$.
\end{proof}

\subsection{Proof of Theorem \ref{Rguarantee}}\label{sec:proofmain}
Now we are ready to prove the main result Theorem \ref{Rguarantee}.

\begin{proof}[Proof of Theorem 	\ref{Rguarantee}]
By the notations in Algorithm \ref{alg:Rgrad}, $\T_{l+1}=\mathscr{H}_{\bm{r}}(\mathcal{W}_l)$, from which \eqref{eq:quasibest} derives 
$$
\|\T_{l+1}-\mathcal{W}_l\|_F
\leq\sqrt{d}\|\P_{\mathbb{M}_{\bm{r}}}(\mathcal{W}_l)-\mathcal{W}_l\|_F
\leq\sqrt{d}\|\T-\mathcal{W}_l\|_F.
$$
Therefore,
$$\|\T_{l+1}-\T\|_F\le\|\T_{l+1}-\mathcal{W}_l\|_F+\|\mathcal{W}_l-\T\|_F\le(\sqrt{d}+1)\|\mathcal{W}_l-\T\|_F.
$$
Substituting $\mathcal{W}_l=\T_l-\alpha_l\P_{\mathcal{S}_l}\mathcal{G}_l$ into the above inequality gives
\begin{equation*}
\begin{split}
\norm{\T_{l+1}-\T}_F\le&(\sqrt{d}+1)\|(\T_l-\alpha_l\P_{\mathcal{S}_l}\mathcal{G}_l)-\T\|_F=(\sqrt{d}+1)\|\T_l-\alpha_l\P_{\S_l}\A^{*}(\A\T_l-\bm{y})-\T\|_F\\
=&(\sqrt{d}+1)\|(\mathscr{I}-\alpha_l\P_{\S_l}\A^{*}\A)(\T_l-\T)\|_F\\
\le&(\sqrt{d}+1)\Big(\underbrace{\|(\P_{\S_l}-\alpha_l\P_{\S_l}\A^{*}\A\P_{\S_l})(\T_l-\T)\|_F}_{I_1}
+\underbrace{\|(\mathscr{I}-\P_{\S_l})(\T_l-\T)\|_F}_{I_2}
\cr
&\qquad\qquad\qquad\qquad+
\alpha_l\underbrace{\|\P_{\S_l}\A^{*}\A(\mathscr{I}-\P_{\S_l})(\T_l-\T)\|_F}_{I_3}\Big)
\end{split}
\end{equation*}
In the following, we will bound $I_1$, $I_2$ and $I_3$ one by one.
\begin{itemize}
\item \emph{Bound of $I_1$:} We bound $I_1$ by bounding the operator norm of $\P_{\S_l}-\P_{\S_l}\A^*\A\P_{\S_l}$. Since it is a self-adjoint linear operator, we have 
\begin{align*}
\norm{\P_{\S_l}-\P_{\S_l}\A^*\A\P_{\S_l}} &=\sup_{\norm{\Z}_F=1}\abs{\inner{(\P_{\S_l}-\P_{\S_l}\A^*\A\P_{\S_l})\Z,\Z}}=\sup_{\norm{\Z}_F=1}\abs{\norm{\P_{\S_l}\Z}_F^2-\norm{\A\P_{\S_l}\Z}_2^2}\\
&\le\sup_{\norm{\Z}_F=1}\delta_{2\rr}\norm{\P_{\S_l}\Z}_F^2\le \delta_{2\rr},\numberthis\label{eq:spectral_bound_gniht_dense}
\end{align*}
where the inequality follows from the TRIP assumption of $\A$ by noting that $\mathrm{mulrank}(\P_{\S_l}\Z)\preceq 2\rr$. The TRIP assumption also helps the estimation of the stepsize $\alpha_l$ as
\begin{equation}\label{eq:stepsize_gniht_dense}
\frac{1}{1+\delta_{2\rr}}\leq\alpha_l=\frac{\norm{\P_{\S_l}(\mathcal{G}_l)}_F^2}{\norm{\A\P_{\S_l}(\mathcal{G}_l)}_2^2}\leq\frac{1}{1-\delta_{2\rr}},
\end{equation}
which immediately implies 
\begin{equation}\label{eq:stepsize_gniht_dense1}
\abs{\alpha_l-1}\leq\frac{\delta_{2\rr}}{1-\delta_{2\rr}}.
\end{equation}
Combining \eqref{eq:spectral_bound_gniht_dense} and \eqref{eq:stepsize_gniht_dense1} gives the bound of the operator norm of $\P_{\S_l}-\alpha_l\P_{\S_l}\A^*\A\P_{\S_l}$
\begin{align*}
\norm{\P_{\S_l}-\alpha_l\P_{\S_l}\A^*\A\P_{\S_l}} & \leq \norm{\P_{\S_l}-\P_{\S_l}\A^*\A\P_{\S_l}}+\abs{1-\alpha_l}\norm{\P_{\S_l}\A^*\A\P_{\S_l}}\\
&\leq \delta_{2\rr}+\frac{\delta_{2\rr}}{1-\delta_{2\rr}}(1+\delta_{2\rr}) = \frac{2\delta_{2\rr}}{1-\delta_{2\rr}}\label{eq:operator_bound_gniht_dense}.
\end{align*}
Thus $I_1$ can be bounded as 
\begin{equation}\label{eq:gniht_dense_I1}
I_1\leq\frac{2\delta_{2\rr}}{1-\delta_{2\rr}}\norm{\T_l-\T}_F.
\end{equation}

\item \emph{Bound of $I_2$:} The second term $I_2$ can be bounded by applying Lemma~\ref{lem:proj err} directly as follows
\begin{equation}\label{eq:gniht_dense_I2}
I_2=\|(\mathscr{I}-\P_{\S_l})\T\|_F\leq \frac{2^d-1}{\min\limits_{i}(\sigma_{r_i}(\T_{(i)}))}\norm{\T-\T_l}_F^2
\end{equation}

\item \emph{Bound of $I_3$:} We use Lemmas \ref{lem:proj err} and \ref{lem:sensing dense2} to bound $I_3$ as in below
\begin{align*}
I_3=\|\P_{\S_l}\A^{*}\A(\mathscr{I}-\P_{\S_l})\T\|_F&\leq (1+\delta_{2\rr})\norm{(\mathscr{I}-\P_{\S_l})\T}_F
\leq (1+\delta_{2\rr})\frac{2^d-1}{\min\limits_{i}(\sigma_{r_i}(\T_{(i)}))}\norm{\T-\T_l}_F^2
\end{align*}

\end{itemize} 
Combining bounds of $I_1$, $I_2$, $I_3$ and $\alpha_l$ gives
\begin{align}
\norm{\T_{l+1}-\T}_F\leq
\frac{2(\sqrt{d}+1)}{1-\delta_{2\bm{r}}}
\left(\delta_{2\rr}+\frac{2^d-1}{\min\limits_{i}(\sigma_{r_i}(\T_{(i)}))}\norm{\T-\T_l}_F\right)\norm{\T_l-\T}_F.
\label{eq:recursive_gniht_dense_null}
\end{align}

Now we estimate the initial error $\|\T-\T_0\|_F$. As in Algorithm \ref{alg:Rgrad}, $\T_0=\mathscr{H}_{\rr}(\A^*\bm{y})$ is the result of one step of IHT with step-size $1$ starting from the zero tensor.  Let $\bm{Q}^{(i)}\in\R^{n_i\times 2r_i}$ be the orthogonal matrix which spans the column subspaces of mode-$i$ matricizations $(\T_0)_{(i)}$ and $\T_{(i)}$ for $i=1,\ldots,d$. The first $r_i$ columns of $\bm{Q}^{(i)}$ are the leading $r_i$ singular vectors of $(\A^*\bm{y})_{(i)}$. Define projection operator
\begin{equation*}
\P_{\bm{Q}} = \prod_{i=1}^d\P_{\bm{Q^{(i)}}}^{(i)},
\end{equation*}
whose range is a subset of tensors with a multilinear rank at most $2\rr$. Since $\bm{Q}^{(i)}$ contains the leading $r_i$ singular vectors of $(\A^*\bm{y})_{(i)}$ for all $i$ and according to \cite[Theorem 2]{de2000multilinear}, core tensor of  $\A^*\bm{y}$ has the ordering property, $\P_{\bm{Q}}\A^*\bm{y}$ and $\A^*\bm{y}$ have the same leading singular vectors and singular values. Therefore, we have $\T_0=\mathscr{H}_{\bm{r}}(\A^*\bm{y})=\mathscr{H}_{\bm{r}}(\P_{\bm{Q}}\A^*\bm{y})$. This together with \eqref{eq:quasibest} implies
$$
\norm{\T_0-\P_{\bm{Q}}\A^*\bm{y}}_F\leq \sqrt{d}\norm{\P_{\mathbb{M}_{\bm{r}}}(\P_{\bm{Q}}\A^*\bm{y})-\P_{\bm{Q}}\A^*\bm{y}}_F
\leq\sqrt{d}\norm{\T-\P_{\bm{Q}}\A^*\bm{y}}_F.
$$
Additionally, we have $\P_{\bm{Q}}\T=\T$. So we have 
\begin{align}
\norm{\T_0-\T}_F&\leq \norm{\T_0-\P_{\bm{Q}}\A^*\bm{y}}_F+\norm{\P_{\bm{Q}}\A^*\bm{y}-\T}_F\leq (\sqrt{d}+1)\norm{\T-\P_{\bm{Q}}\A^*\bm{y}}_F \notag\\
&= (\sqrt{d}+1)\norm{(\P_{\bm{Q}}-\P_{\bm{Q}}\A^*\A\P_{\bm{Q}})\T}_F \leq(\sqrt{d}+1)\norm{(\P_{\bm{Q}}-\P_{\bm{Q}}\A^*\A\P_{\bm{Q}})}\norm{\T}_F\notag\\
&\leq (\sqrt{d}+1)\delta_{2\rr}\norm{\T}_F,
\label{eq:gniht_dense_init_bound}
\end{align}
where the last inequality follows from $\norm{\P_{\bm{Q}}-\P_{\bm{Q}}\A^*\A\P_{\bm{Q}}}\leq\delta_{2\bm{r}}$ obtained similarly to \eqref{eq:spectral_bound_gniht_dense}.

Define $\gamma$ as in \eqref{eq:gamma}, i.e.,
\begin{equation}\label{eq:gamma2}
\gamma=\frac{2\delta_{2\bm{r}}}{1-\delta_{2\bm{r}}}
(\sqrt{d}+1)\left(1+(2^d-1)(\sqrt{d}+1)\frac{\norm{\T}_F}{\min\limits_{i}(\sigma_{r_i}(\T_{(i)}))}\right).
\end{equation}
If $\gamma<1$, inserting~\eqref{eq:gamma2} into \eqref{eq:recursive_gniht_dense_null} and proof by induction gives 
\begin{equation}\label{eq:recursive_gniht_dense}
\norm{\T_{l+1}-\T}_F \leq \gamma\norm{\T_l-\T}_F\leq\ldots
\leq\gamma^{l+1}\norm{\T_0-\T}_F.
\end{equation}
Moreover, if \eqref{eq:ric} is satisfied, then, letting $i_0=\arg\min_{i}\sigma_1(\T_{(i)})$,
\begin{equation*}
\begin{split}
\gamma&\leq\frac{2\delta}{1-\delta}(\sqrt{d}+1)\left(1+(2^d-1)(\sqrt{d}+1)\frac{\norm{\T}_F}{\min\limits_{i}(\sigma_{r_i}(\T_{(i)}))}\right)
\leq \frac{2\delta}{1-\delta}2^d(\sqrt{d}+1)^2\frac{\norm{\T}_F}{\min\limits_{i}(\sigma_{r_i}(\T_{(i)}))}\cr
&\leq\frac{2\delta}{1-\delta}2^d(\sqrt{d}+1)^2\frac{\sqrt{r_{i_0}}\|\T_{(i_0)}\|_2}{\min\limits_{i}(\sigma_{r_i}(\T_{(i)}))}
\leq\frac{2\delta}{1-\delta}2^d(\sqrt{d}+1)^2\sqrt{r}\kappa
=\frac{2}{3}\frac{1}{1-\delta}
\leq \frac{2}{3}\cdot\frac{48}{47}<1,
\end{split}
\end{equation*} 
where we have used $\|\T\|_F=\norm{\T_{(i)}}_F\leq\sqrt{r_i}\norm{\T_{(i)}}_2$ for all $i$ in the first inequality in the second line.
\end{proof}

\section{Numerical Experiments}
In this section, we present numerical results of Algorithm \ref{alg:Rgrad} for solving the tensor recovery problem \eqref{eq:tensorrecovery}. We focus on the demonstration of the sampling complexity of Algorithm \ref{alg:Rgrad} as shown in the main results of this paper. The computational efficiency of this algorithm has been already illustrated in \cite{kressner2014low}. 

We test only cubic tensors of size $n\times n\times n$ with multilinear rank $\bm{r}=(r_1,r_2,r_3)$, which are generated randomly through Tucker decomposition. The operator $\A$ is drawn randomly from different distributions, described in detail later. For each set of parameters $n,\bm{r},m$, we run $20$ random tests and count the success rate. A test is regarded as a successful recovery if the relative error of the recovered tensor $\T_l$ and the original tensor $\T$ satisfies $\|\T_l-\T\|_F/\|\T\|_F\leq 10^{-3}$. 

We first show results where the measurement tensors $\mathcal{A}_i$ in $\A$ are randomly drawn from Gaussian distribution. In particular, the entries of $\mathcal{A}_i$ are drawn from i.i.d. Gaussian distribution with mean $0$ and variance $1/m$. Figure \ref{fig:trans_1} shows the results. In Figure~\ref{fig:trans_1}(a)(c),  we plot the curve of successful recovery rate against the sampling ratio $m/n^3$ for tensors of size $10\times 10\times 10$ and different ranks. Figure~\ref{fig:trans_1}(e) depicts the successful recovery rate under different tensor sizes $n$ and different number of samples $m$ for tensors with a fixed multilinear rank $\bm{r}=(7,7,7)$. The color of each cell reflects the empirical recovery rate ranging from $0$ to $1$. Black cell means exact recovery in all experiments and white cell means all experiments failed. We see from this figure that the minimum $m$ for a nearly $100\%$ successful recovery grows linearly with $n$, which is consistent with our results in Corollary \ref{cor:SamComplexity}.

\begin{figure}[ht!]
\centering
\subfigure[]{\includegraphics[width=.45\linewidth,height=4.2cm]{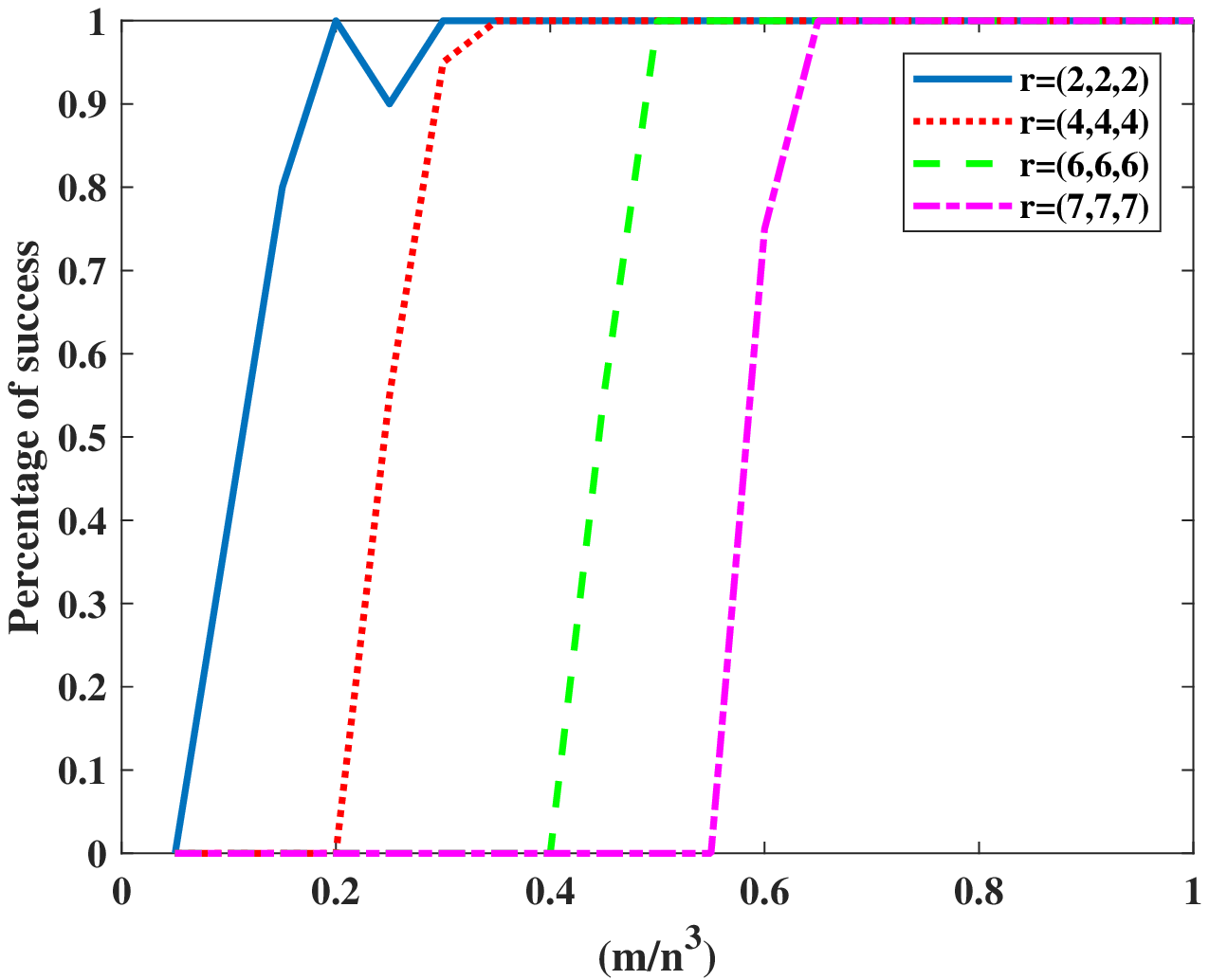}}
\subfigure[]{\includegraphics[width=.45\linewidth,height=4.2cm]{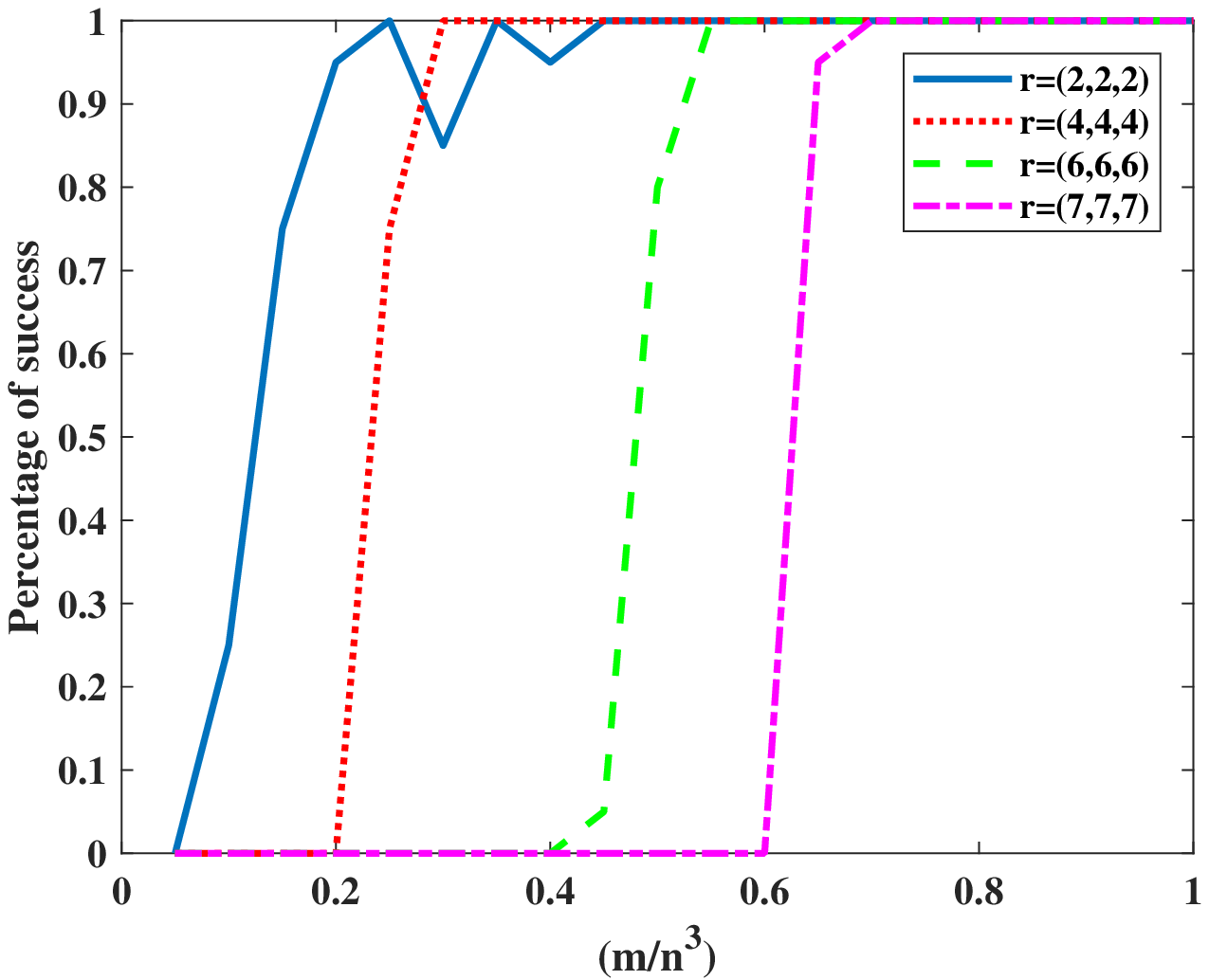}} \\
\subfigure[]{\includegraphics[width=.45\linewidth,height=4.2cm]{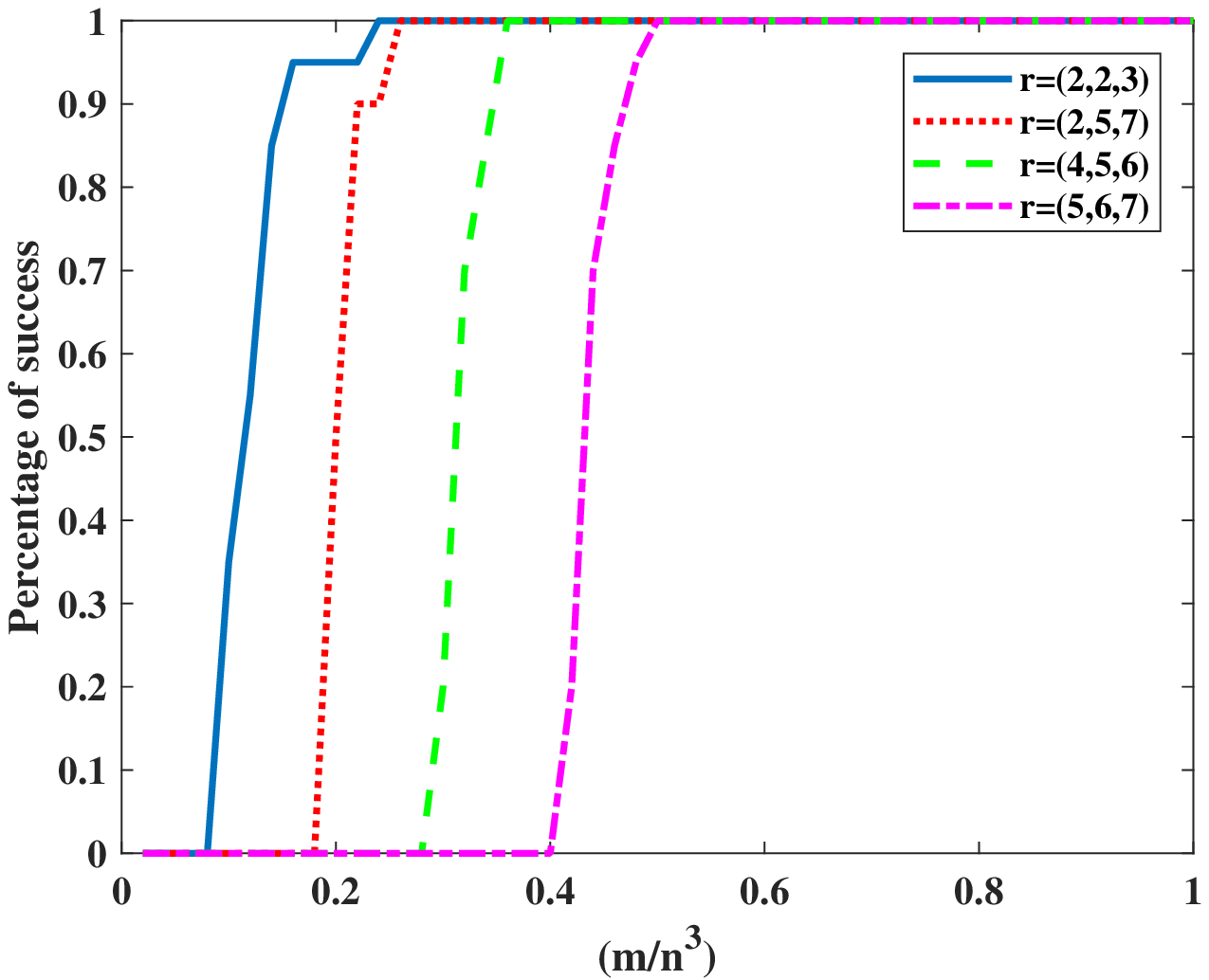}}
\subfigure[]{\includegraphics[width=.45\linewidth,height=4.2cm]{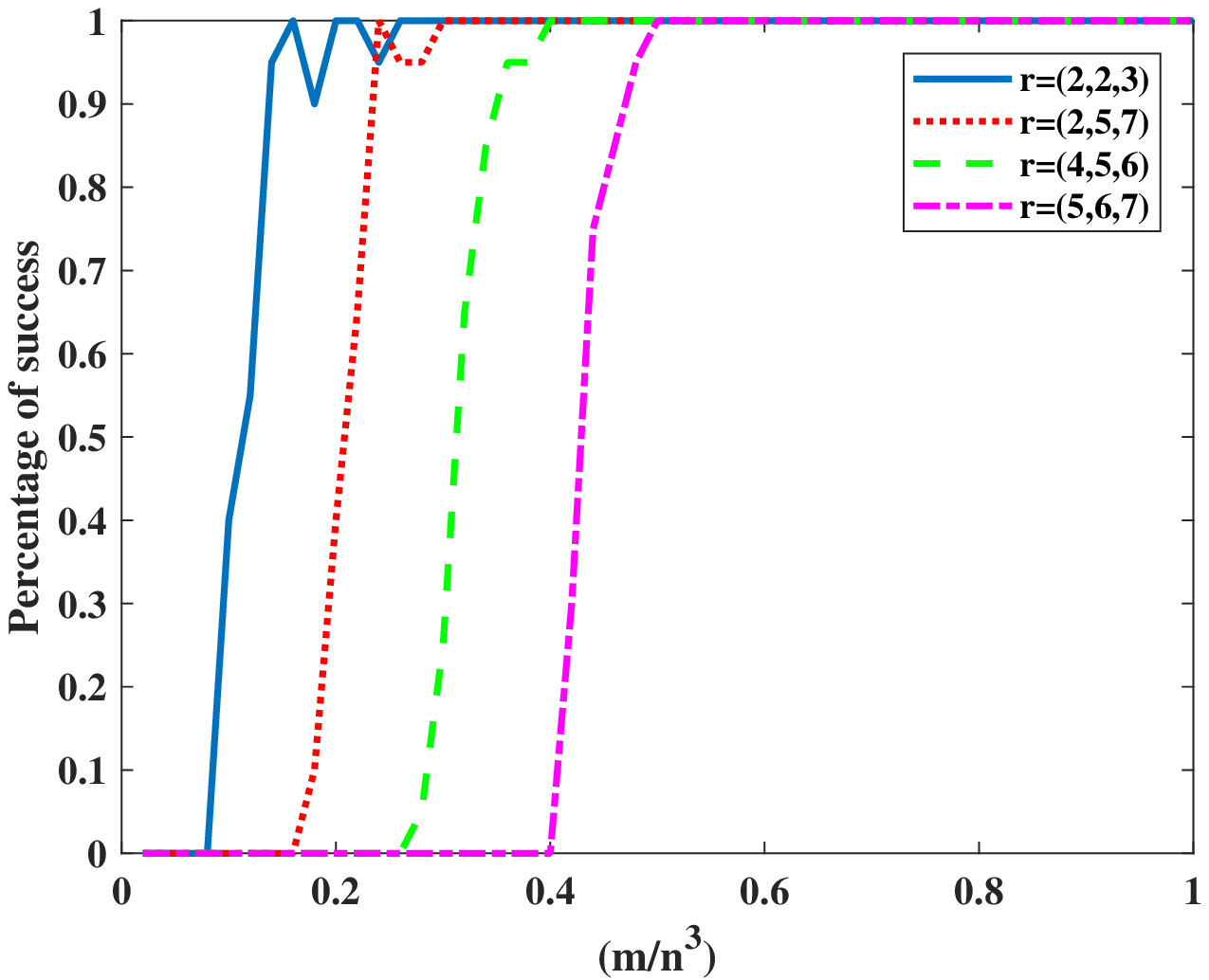}} \\
\subfigure[]{\includegraphics[width=.45\linewidth,height=4.2cm]{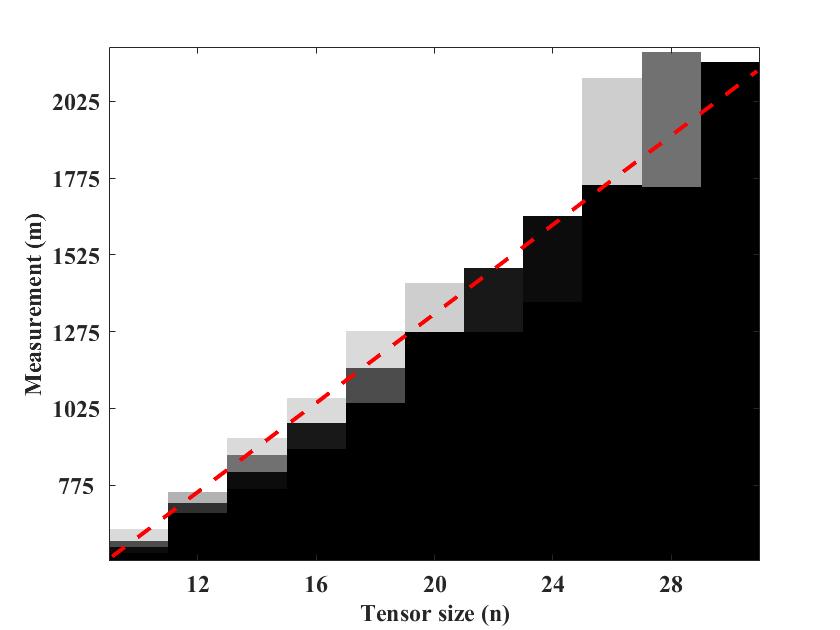}}
\subfigure[]{\includegraphics[width=.45\linewidth,height=4.2cm]{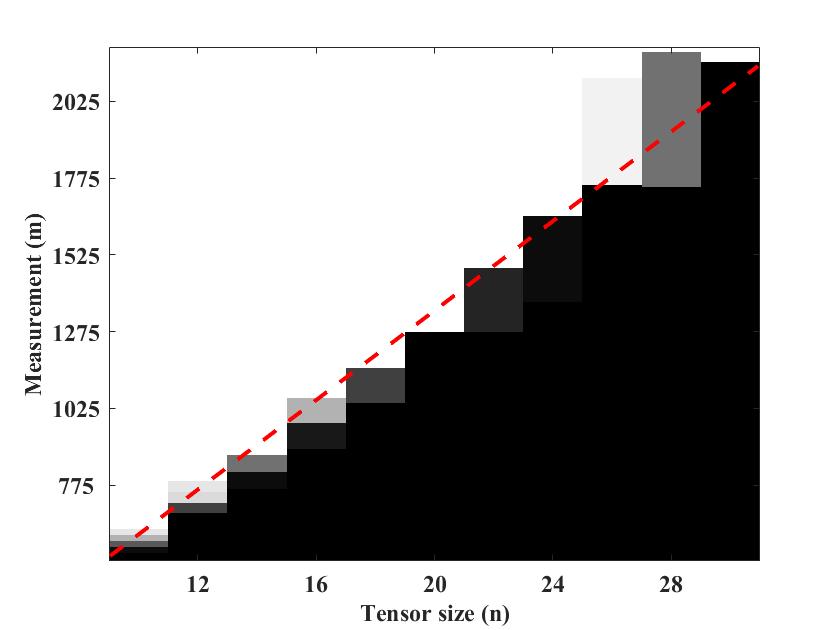}}
\caption{Results of tensor recovery from Gaussian measurements. (a) Recovery of $10\times 10\times 10$ tensors of different ranks from exact data. (b) Recovery of $10\times 10\times 10$ tensors of different ranks from noisy data. (c) Recovery of $10\times 10\times 10$ tensors of different ranks from exact data. (d) Recovery of $10\times 10\times 10$ tensors of different ranks from noisy data. (e) Successful recovery rate of tensors with a fixed rank $\bm{r}=(7,7,7)$ from exact data. (f) Successful recovery rate of tensors with a fixed rank $\bm{r}=(7,7,7)$ from noisy data. The red dashed line is the graph of a linear function of $n$.}
\label{fig:trans_1}
\end{figure}

To demonstrate the robustness of the algorithm, we also investigate the recovery efficiency in the presence of noise. The linear measurement $\bm{y}$ is perturbed by a Gaussian noise whose $2$-norm is $10^{-4}$ of $\|\bm{y}\|_2$. The recovery is regarded as successful when the recovered tensor has a relative error $10^{-3}$. The results are shown in Figure~\ref{fig:trans_1}(b), (d) and (f). Again, we see that, even in the presence of noise, the emprical sampling complexity $m$ of our algorithm is linear in $n$.

Finally, we illustrate results when $\A$ is generated from a random Fourier model in \cite{rauhut2017low,rauhut2015tensor}. In particular, $\A=\frac{1}{\sqrt{m}}\mathscr{R}_{\Omega}\mathscr{F}_d\mathscr{D}$ is the composition of a random sign flip map $\mathscr{D}$ with independent $\pm{1}$ Rademacher variables, a $d$-dimensional Fourier transform $\mathscr{F}_d$, and a random subsampling operator $\mathscr{R}_{\Omega}$ that takes only entries on $\Omega$. The random Fourier operator can sample large size tensors, as the sampling operator here contains much simpler  parameters than in Gaussian measurements. The results with random Fourier measurements are shown in Figure~\ref{fig:trans_2}.  Figure~\ref{fig:trans_2}(a) and (b) presents the curve of successful recovery rate against the sampling ratio $m/n^3$ for tensors of size $50\times 50\times 50$ with different ranks. 
Figure~\ref{fig:trans_2}(c) shows the successful recovery rate under different tensor sizes $n$ and different number of samples $m$ for tensors with a fixed multilinear rank $\bm{r}=(35,35,35)$. Again, the gray level of each cell reflects the empirical recovery rate ranging from $0$ to $1$. We see from this figure that the minimum $m$ for a nearly $100\%$ successful recovery grows linearly with $n\log^2(n)$, which is consistent with the combination of our main result Theorem \ref{Rguarantee} and the result in \cite{rauhut2017low} on TRIP of random Fourier operators.

\begin{figure}[ht!]
\centering
\subfigure[]{\includegraphics[width=.45\linewidth,height=4.2cm]{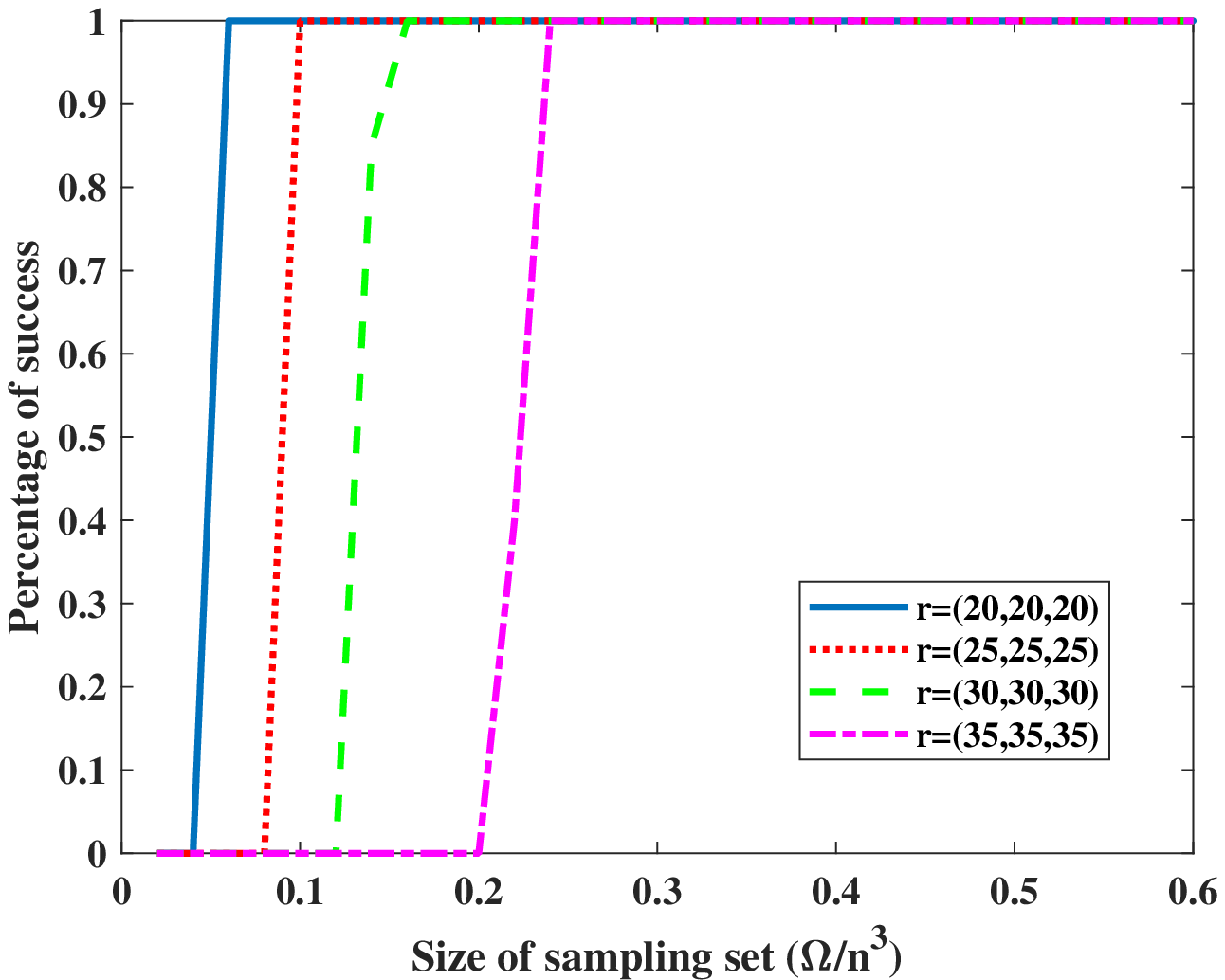}}
\subfigure[]{\includegraphics[width=.45\linewidth,height=4.2cm]{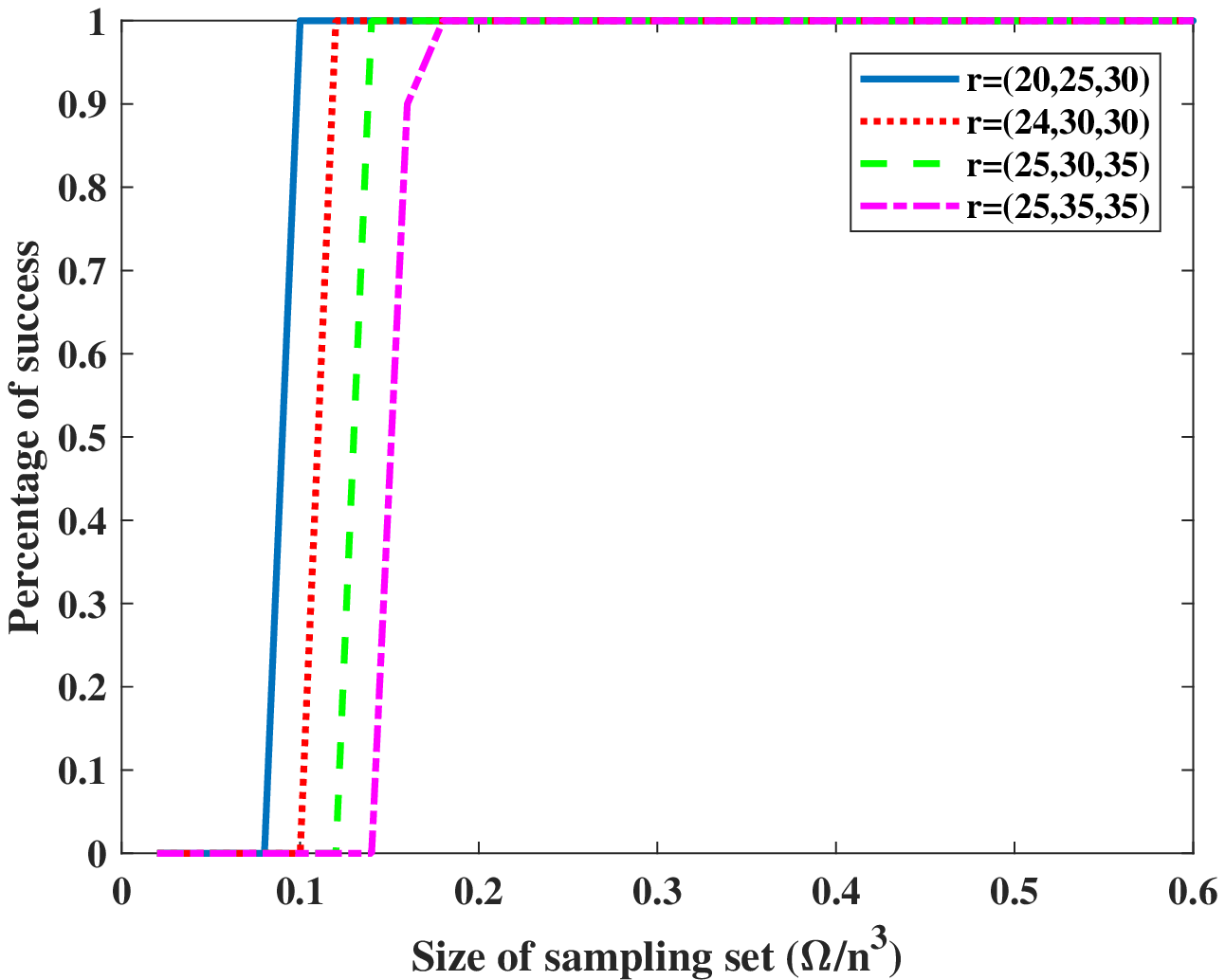}} \\
\subfigure[]{\includegraphics[width=.45\linewidth,height=4.3cm]{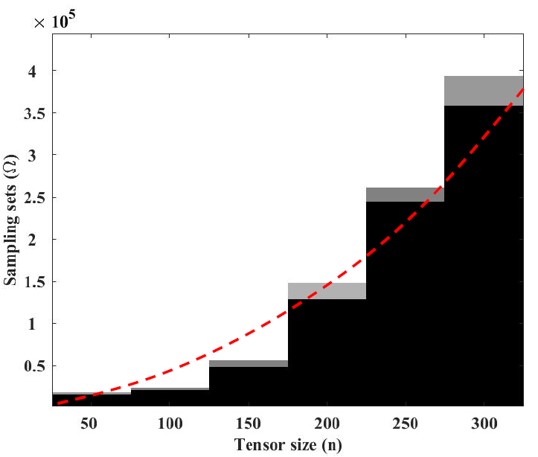}}
\caption{Results of tensor recovery from random Fourier measurements. (a) Recovery of $50\times 50\times 50$ tensors of different ranks. (b) Recovery of $50\times 50\times 50$ tensors of different ranks. (c) Successful recovery rate of tensors with a fixed rank $\bm{r}=(35,35,35)$. The red dashed line shows the graph of a function proportional to $n\log^2(n)$.}
\label{fig:trans_2}
\end{figure}

\section{Conclusion and Future Direction}
In this paper, we established a theoretical bound for the low-multilinear-rank tensor recovery from its linear measurements via the Riemannian manifold optimization algorithm. The theoretical recovery guarantee is proved based on the tensor restricted isometry property and the geometry of the low-multilinear-rank tensor manifold. In particular, for an order-$3$ $n\times n\times n$ tensor with a multilinear rank $(r,r,r)$, the number of linear measurements required for an exact recovery is $O(nr^2+r^4)$. This bound of sampling complexity is optimal in $n$, while existing provable tensor recovery approaches usually have a bound unnecessarily large in $n$. 

The robustness analysis of the algorithm to noise is in the scope of future work. It will also be desirable to extend the method for tensor completion and tensor robust principal component analysis models and see whether the algorithm can achieve optimal results. It is also of interest to investigate the algorithm to tensor recovery in frameworks of other low-rank tensor models. 

\bibliographystyle{siam} 
\bibliography{biblio}

\end{document}